\documentclass[10pt,leqno]{amsart}
\usepackage{graphicx}
\usepackage{indentfirst,csquotes}
\usepackage{mathtools}

\usepackage{amssymb,amsthm,amsmath}
\usepackage{xcolor,paralist,hyperref,titlesec,fancyhdr,etoolbox}

\newtheorem{theorem}{Theorem}[section]
\newtheorem*{theorem*}{Theorem}
\newtheorem{lemma}[theorem]{Lemma}
\newtheorem*{corollary*}{Corollary}
\newtheorem{corollary}[theorem]{Corollary}
\newtheorem*{proposition*}{Proposition}
\newtheorem{proposition}[theorem]{Proposition}
\newtheorem*{acknowledgments*}{Acknowledgments}

\theoremstyle{definition}
\newtheorem{definition}[theorem]{Definition}

\theoremstyle{remark}
\newtheorem{remark}[theorem]{Remark}

\numberwithin{equation}{section}

\def\nn{\mathbb{N}}
\def\zz{\mathbb{Z}}
\def\qq{\mathbb{Q}}

\def\ff{\mathbb{F}}
\def\pp{\mathbb{P}}

\def\gcd{{\rm gcd}}
\def\lcm{{\rm lcm}}

\def\ord{{\rm ord}}

\newcommand{\res}{\operatorname{Res}}

\titleformat{\section}
{\normalfont\Large\bfseries\centering}
{\thesection.}
{1em}
{}

\hypersetup{ colorlinks=true, linkcolor=black, filecolor=black, urlcolor=black }

\usepackage{lipsum}

\begin{document}
\title{Resultants of dynatomic polynomials of $x^d$} %%%%%%%%%%%%
\author{Chih-Chiang Kao}
\date{}
\address{Department of Mathematics, Institute of Science Tokyo, 2-12-1 Ookayama, Meguro-ku, 152-8550, Tokyo, Japan}
\email{kao.c.aa@m.titech.ac.jp}
\maketitle

\let\thefootnote\relax
\footnotetext{MSC2020: Primary 13P15, Secondary 11B83 .} %%%%%%%%%%
\footnotetext{Key words and phrases. Resultant, dynatomic polynomial, cyclotomic polynomial.}

\begin{abstract}
Let $K$ be a field of characteristic zero, and let $\phi(x)\in K[x]$ be a polynomial of degree at least 2. Denote the $n$-th iterate of $\phi$ by $\phi^n$. The $n$-th dynatomic polynomial of $\phi$ is defined by $$\Phi_{\phi,n}(x) \coloneqq \prod_{k\mid n}(\phi^k(x)-x)^{\mu(n/k)}.$$
In this paper, we specialize to the case $\phi(x) = x^d$. We first establish several properties of $\Phi_{\phi,n}$ that are analogous to those of cyclotomic polynomials. We then combine these properties with known results on resultants of cyclotomic polynomials to determine the resultants of dynatomic polynomials. In particular, for $1\leq n< m$, we show that $\res(\Phi_{\phi,n},\Phi_{\phi,m}) = 1$
if and only if $n\nmid m$, and we obtain an explicit formula for $\res(\Phi_{\phi,1},\Phi_{\phi,m})$.
\end{abstract} %%%%%%%%%

\bigskip

\section{Introduction}

Let $K$ be a field of characteristic zero. Let $\phi(x)\in K[x]$ be a polynomial with $\deg\phi\geq 2$. For each positive integer $k$, let $\phi^k(x)$ denote the $k$-th iterate of $\phi$. We also set $\phi^0(x) = x$. We begin by defining the dynatomic polynomial associated with $\phi$.

\begin{definition}
    The $n$-th \textit{dynatomic polynomial} $\Phi_{\phi,n}(x)\in K[x]$ associated with $\phi$ is defined by
    $$\Phi_{\phi,n}(x) = \prod_{k\mid n}(\phi^k(x)-x)^{\mu(n/k)},$$
    where $\mu(\cdot)$ denotes the Möbius function.
\end{definition}

Although the above definition yields a rational function, since the Möbius function may take negative values, Morton and Patel \cite[Theorem 2.5]{Morton94} proved that $\Phi_{\phi,n}(x) = \prod_{k\mid n}(\phi^k(x)-x)^{\mu(n/k)}$ is in fact a polynomial over $K$. For more details on dynatomic polynomials, we refer the reader to \cite[Section 4.1]{Silverman07}.

A point $\alpha\in\pp^1_K$ is called a periodic point of $\phi$ of period $n$ if $\phi^n(\alpha) = \alpha$. That is, $\alpha$ is a root of the polynomial $\phi^n(x)-x$. We say that $\alpha$ has an exact period $n$ if $\phi^n(\alpha) = \alpha$ and $\phi^i(\alpha)\neq \alpha$ for all $1\leq i< n$. We also say that $\alpha$ has a formal period $n$ if $\Phi_{\phi,n}(\alpha) = 0$.

It follows immediately from the definitions that
$$\alpha\text{ has an exact period }n \Rightarrow \alpha\text{ has a formal period }n \Rightarrow \alpha\text{ has period }n.$$
On the other hand, the example $\phi(x) = x^2-\frac{3}{4}$ in $\qq[x]$ shows that neither converse implication holds.

The $n$-th cyclotomic polynomial $\Phi_n(x)$, whose roots are precisely the primitive $n$-th roots of unity, is defined by
$$\Phi_n(x) = \prod_{k\mid n}(x^k-1)^{\mu(n/k)}.$$

Let $\alpha$ be a root of the dynatomic polynomial $\Phi_{\phi,n}(x)$. The multiplier $\omega_n(\alpha)$ is defined as the derivative of $\phi^n$ at $\alpha$. In \cite{Vivaldi92}, it is proved that there exists a monic polynomial $\delta_n(x)\in K[x]$ satisfying
$$(\delta_n(x))^n = \prod_{\Phi_{\phi,n}(\alpha) = 0}(x-\omega_n(\alpha)),$$
which is called the $n$-th multiplier polynomial of $\phi$. In \cite{Morton95}, it is proved that:
for any divisor $m$ of $n$ with $m<n$, we have
$$\res(\Phi_{\phi,n},\Phi_{\phi,m}) = \pm\Delta_{n,m}^m,$$
where $\Delta_{n,m} = \res(\Phi_{n/m}(x),\delta_m(x))$.

The resultant of $\Phi_{\phi,n}$ and $\Phi_{\phi,m}$ can be expressed in terms of the resultant of a cyclotomic polynomial and the multiplier polynomial $\delta_m$. However, it is difficult to determine $\delta_m$ explicitly. 

In this paper, we restrict our attention to the case $\phi(x) = x^d$ and use known results on resultants of cyclotomic polynomials to prove the following theorem.
\begin{theorem*}
    For $1\leq n<m$, we have 
    \begin{enumerate}
        \item $\res(\Phi_{\phi,1},\Phi_{\phi,m}) = \Phi_m(d)^{d-1}$,
        \item $\res(\Phi_{\phi,n},\Phi_{\phi,m}) = 1$ if and only if $n\nmid m$.
    \end{enumerate}
\end{theorem*}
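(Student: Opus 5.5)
The approach is to reduce everything to cyclotomic polynomials. For $\phi(x)=x^d$ we have $\phi^k(x)-x = x\,(x^{d^k-1}-1)$. For $n>1$ we have $\sum_{k\mid n}\mu(n/k)=0$, so the factor $x$ cancels and
$$\Phi_{\phi,n}(x)=\prod_{k\mid n}(x^{d^k-1}-1)^{\mu(n/k)}, \qquad \Phi_{\phi,1}(x)=x(x^{d-1}-1).$$
Next I would write $x^{d^k-1}-1=\prod_{e\mid d^k-1}\Phi_e(x)$. Since $e\mid d^k-1$ iff $\gcd(e,d)=1$ and $\ord_e(d)\mid k$, the exponent of $\Phi_e$ in $\Phi_{\phi,n}$ is $\sum_{\ord_e(d)\mid k\mid n}\mu(n/k)$, which is $1$ if $\ord_e(d)=n$ and $0$ otherwise. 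This gives the structural identity
$$\Phi_{\phi,n}(x)=\prod_{\gcd(e,d)=1,\ \ord_e(d)=n}\Phi_e(x)\quad (n>1),$$
with the extra factor $x$ when $n=1$. Resultants are multiplicative, so $\res(\Phi_{\phi,n},\Phi_{\phi,m})$ is, up to sign, a product of the factors $\res(\Phi_e,\Phi_f)$ and $\res(x,\Phi_f)=\pm1$. I will then invoke the classical result (Apostol): for $e\ne f$, $|\res(\Phi_e,\Phi_f)|$ is $1$ unless $f/e$ or $e/f$ is a prime power $p^j$, in which case it is a power of $p$ greater than $1$.

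For (1), I would compute directly. We have $\res(x,\Phi_{\phi,m})=\pm\Phi_{\phi,m}(0)$, and $\Phi_{\phi,m}(0)=\prod_{k\mid m}(-1)^{\mu(m/k)}=1$. It remains to find $\res(x^{d-1}-1,\Phi_{\phi,m})=\pm\prod_{\zeta^{d-1}=1}\Phi_{\phi,m}(\zeta)$. At such $\zeta$ each factor $x^{d^k-1}-1$ vanishes. Near $\zeta$, however, $x^{d^k-1}-1=\frac{x^{d^k-1}-1}{x^{d-1}-1}\,(x^{d-1}-1)$, and the quotient takes the value $\frac{d^k-1}{d-1}$ at $\zeta$ because $\zeta^{d-1}=1$. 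Since $\sum\mu(m/k)=0$, the powers of $(x^{d-1}-1)$ and the denominators $(d-1)$ cancel, so
$$\Phi_{\phi,m}(\zeta)=\prod_{k\mid m}(d^k-1)^{\mu(m/k)}=\Phi_m(d).$$
Taking the product over the $d-1$ roots gives $\Phi_m(d)^{d-1}$. I would fix the sign by a parity check on degrees, or by noting that both sides are positive.

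For (2), suppose first that $n\nmid m$. Take $e,f$ with $\ord_e(d)=n$ and $\ord_f(d)=m$. If $e\mid f$ then $n\mid m$, which is excluded. If $f\mid e$ then $m\mid n$, which is impossible since $n<m$. Hence no quotient $e/f$ or $f/e$ is a prime power, every factor is $\pm1$, and the resultant is $1$; when $n=1$ the hypothesis $n\nmid m$ cannot occur. Conversely, suppose $n\mid m$. If $n=1$, part (1) gives $\Phi_m(d)^{d-1}>1$. If $n>1$, it suffices to exhibit one pair $e,f$ with $\ord_e(d)=n$, $\ord_f(d)=m$ and $f/e$ a prime, since all remaining factors are nonzero integers and so cannot cancel it. I would take $e=d^n-1$, which has $\ord_e(d)=n$, and use Zsigmondy's theorem to pick a prime $p$ with $\ord_p(d)=m$. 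Then $p\nmid d^n-1$, so $f=ep$ satisfies $\ord_f(d)=\lcm(n,m)=m$, and $f/e=p$.

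The main obstacle is this last step, namely the Zsigmondy exceptions. The case $m=2$ forces $n=1$ and is already covered by (1), so the only remaining exception is $d=2$, $m=6$, $n\in\{2,3\}$. There $\ord_e(2)=2$ forces $e=3$, $\ord_e(2)=3$ forces $e=7$, and $\ord_f(2)=6$ forces $f\in\{9,21,63\}$. The pairs $(3,9)$ and $(7,21)$ have prime-power quotients $3$ and $3$ respectively, so both resultants are nontrivial.
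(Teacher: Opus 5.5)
Your proof is correct, and it takes a genuinely different route from the paper's in two places.

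\textbf{Part (1).} The paper splits $\Phi_{d,1}$ and $\Phi_{d,m}$ into cyclotomic factors and applies the cyclotomic resultant formula (Theorem 2.2). It then identifies the $p$-adic valuation of the product through the sets $S(p,d,m)$ and Lemma 3.9. That lemma rests on lifting-the-exponent and the order computations of Lemmas 3.7--3.8, with a separate case analysis for $p=2$. You bypass all of this by evaluating the M\"obius product directly at the $(d-1)$-th roots of unity. The identity $\sum_{k\mid m}\mu(m/k)=0$ cancels the common vanishing factor $x^{d-1}-1$, and each geometric-series quotient takes the value $(d^k-1)/(d-1)$. This takes a few lines, and at $\zeta=1$ it also gives the paper's Corollary 3.10, $\Phi_{d,m}(1)=\Phi_m(d)$, for free.

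\textbf{Part (2).} Your argument for $n\nmid m\Rightarrow\res=1$ is the same as the paper's (Lemma 4.1 and Corollary 4.2). For $n\mid m$ with $n>1$, the routes differ:
\begin{itemize}
\item The paper proves the quantitative bound $v_p(\res(\Phi_{d,n},\Phi_{d,m}))\ge v_p(\Phi_m(d))\deg\Phi_{d,n}$ for every prime $p\mid\Phi_m(d)$, by comparing the sets $T(a,p,d,m)$ with $S(p,d,m)$.
\item You exhibit one nontrivial factor $\res(\Phi_e,\Phi_{ep})=p^{\varphi(e)}$, with $e=d^n-1$ and $p$ a Zsigmondy primitive prime divisor of $d^m-1$, and you check $(d,m)=(2,6)$ by hand.
\end{itemize}
Your argument is shorter but only shows $\res\neq1$. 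The paper's gives a lower bound on the size of the resultant and does not need Zsigmondy at this step; the paper uses Zsigmondy only for irreducibility.

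\textbf{Small points to tidy.}
\begin{itemize}
\item In the $n\nmid m$ direction, the absolute-value form of Apostol's theorem only gives $|\res|=1$. To get $\res=1$, note that $n,m\ge2$ forces $e,f\ge3$. Then $\varphi(e)\varphi(f)$ is even, so $\res(\Phi_e,\Phi_f)=\res(\Phi_f,\Phi_e)$, which is $+1$ by Theorem 2.2. Equivalently, the roots of $\Phi_{\phi,n}$ are non-real and closed under conjugation, so $\prod_\alpha\Phi_{\phi,m}(\alpha)>0$.
\item In (1) there is no sign to fix. For monic $A$ one has $\res(A,B)=\prod_{A(\alpha)=0}B(\alpha)$ exactly. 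Appealing to ``both sides positive'' would be circular for the left-hand side.
\item The case $n=1$ of (2) needs $\Phi_m(d)>1$. This follows from $|d-\zeta|>d-1\ge1$ for every root of unity $\zeta\ne1$. The paper also uses this tacitly when it takes a prime divisor of $\Phi_m(d)$.
\end{itemize}
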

We also describe $\res(\Phi_{\phi,n},\Phi_{\phi,m})$ when $n\mid m$. 

In the next section, we review some well-known results on resultants that will be used throughout the remainder of the paper. In Section 3, we establish some properties of $\Phi_{d,n}$ analogous to those of cyclotomic polynomials. In the final section, we determine the resultant $\res(\Phi_{\phi,1},\Phi_{\phi,m})$ for $m>1$ and then investigate $\res(\Phi_{\phi,n},\Phi_{\phi,m})$ for $1\leq n<m$.

\begin{acknowledgments*}
    The author expresses sincere gratitude to Professor Yuichiro Taguchi for his continued help and suggestions.
\end{acknowledgments*}

\section{Facts of resultants}

In this section, we briefly review the definition and some well-known properties of resultants that will be used in the following two sections. For further details, we refer the reader to \cite[Chapter 12]{Gelfand94} or \cite[\textup{Chapter 5}]{Waerden70}.

Given any two polynomials 
$$A = a_nx^n+\cdots+a_1x+a_0\text{ and }B = b_mx^m+\cdots+b_1x+b_0,$$
we define the resultant $\res(A,B)$ of $A$ and $B$ as the determinant of their Sylvester matrix:
$$\begin{array}{c@{\hspace{0.2em}}l}
    \begin{vmatrix}
        a_n & a_{n-1} & \cdots  & a_1    & a_0     &        &        &     \\
            & a_n     & a_{n-1} & \cdots & a_1     & a_0    &        &     \\
            &         & \ddots  & \ddots & \ddots  & \ddots & \ddots &     \\
            &         &         & a_n    & a_{n-1} & \cdots & a_1    & a_0 \\
        b_m & b_{m-1} & \cdots  & b_1    & b_0     &        &        &     \\
            & b_m     & b_{m-1} & \cdots & b_1     & b_0    &        &     \\
            &         & \ddots  & \ddots & \ddots  & \ddots & \ddots &     \\
            &         &         & b_m    & b_{m-1} & \cdots & b_1    & b_0
    \end{vmatrix} & 
    \begin{array}{l}
        \left. \rule{0pt}{7ex} \right\} m \text{ rows} \\[3.5ex]
        \left. \rule{0pt}{7ex} \right\} n \text{ rows}
    \end{array}
\end{array},$$
where the remaining entries are all zero.

\begin{proposition} 
    Suppose that
    $$A = a_n(x-\alpha_1)\cdots(x-\alpha_n)\text{ and }B = b_m(x-\beta_1)\cdots(x-\beta_m).$$
    Then:
    \begin{enumerate}
        \item $\res(A,B) = a_n^m\prod_{i = 1}^nB(\alpha_i) = (-1)^{mn}b_m^n\prod_{j = 1}^mA(\beta_j).$
        \item $\res(B,A) = (-1)^{mn}\res(A,B)$.
        \item If $C$ is another polynomial, then $\res(AB,C) = \res(A,C)\res(B,C)$.
    \end{enumerate}
\end{proposition}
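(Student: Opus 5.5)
These are the standard properties of the resultant. I will derive all three from the Sylvester determinant by first establishing the product formula in (1). Parts (2) and (3) then follow by short manipulations of that formula. I work over a splitting field containing all $\alpha_i$ and $\beta_j$, and treat the roots first as independent indeterminates, so that identities can be proved in a polynomial ring and then specialized.

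For (1), regard the coefficients $a_k = a_n(-1)^{n-k}e_{n-k}(\alpha)$ and $b_k = b_m(-1)^{m-k}e_{m-k}(\beta)$ as polynomials in $a_n, b_m$ and the indeterminates $\alpha_1,\dots,\alpha_n,\beta_1,\dots,\beta_m$. The first step is homogeneity: the $m$ rows built from $A$ are each divisible by $a_n$, and the $n$ rows built from $B$ by $b_m$. Hence $\res(A,B) = a_n^m b_m^n R(\alpha,\beta)$, where $R$ is the Sylvester determinant of the two monic polynomials.

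The key step is to show that $R$ vanishes whenever $\alpha_i=\beta_j$. If $A$ and $B$ share a root $\gamma$, then multiplying the Sylvester matrix by the column vector $(\gamma^{m+n-1},\dots,\gamma,1)^T$ gives the entries $\gamma^{k}A(\gamma)$ and $\gamma^{k}B(\gamma)$, all zero. So the determinant vanishes, and $\alpha_i-\beta_j$ divides $R$ in $\mathbb{Z}[\alpha,\beta]$. Since these $mn$ linear factors are pairwise coprime, $\prod_{i,j}(\alpha_i-\beta_j)$ divides $R$.

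A degree count finishes the argument. The determinant is homogeneous of total degree $mn$ in the roots, since the entries in column $c$ have degree that shifts uniformly. Therefore $R = c\prod_{i,j}(\alpha_i-\beta_j)$ for an integer constant $c$. Setting all $\alpha_i=0$ makes the $A$-rows a shifted identity block, and the determinant becomes $B(0)^n=\prod_j(-\beta_j)^n$, which fixes $c=1$. The two expressions in (1) are then rewrites: $b_m\prod_j(\alpha_i-\beta_j)=B(\alpha_i)$, and symmetrically $a_n\prod_i(\beta_j-\alpha_i)=A(\beta_j)$ with the sign $(-1)^{mn}$.

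Part (2) is immediate from (1): exchanging the roles of $A$ and $B$ swaps the two expressions, which differ by $(-1)^{mn}$. Alternatively, one can permute the row blocks of the Sylvester matrix and count the transpositions.

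For (3), apply the first formula in (1) with the product $AB$ as the first argument. Its leading coefficient is $a_nb_m$, and its roots are the $\alpha$'s together with the $\beta$'s. Write $\deg C=\ell$. Then $\res(AB,C)=(a_nb_m)^{\ell}\prod_i C(\alpha_i)\prod_j C(\beta_j)$, which equals $\res(A,C)\res(B,C)$.

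The step I expect to be the main obstacle is pinning down the constant and the sign in (1). The divisibility-plus-degree argument needs care in checking the homogeneity degree of the Sylvester determinant and in choosing the specialization that normalizes $c$. If this becomes messy, I would instead cite \cite[Chapter 5]{Waerden70}, where exactly this computation is carried out.
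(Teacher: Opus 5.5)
Your proof is correct. The paper gives no proof of Proposition 2.1 and only defers to \cite[Chapter 12]{Gelfand94} and \cite[Chapter 5]{Waerden70}; your argument is essentially the classical one in van der Waerden: roots as indeterminates, the kernel vector $(\gamma^{m+n-1},\dots,\gamma,1)^T$ forcing $\alpha_i-\beta_j\mid R$, a degree count, normalization at $\alpha=0$, and then (2) and (3) read off from (1). The homogeneity step you flagged does hold: the entry of the $i$-th $A$-row (resp.\ $j$-th $B$-row) in column $c$ has degree $c-i$ (resp.\ $c-j$) in the roots, so every term of the determinant has degree $\binom{m+n+1}{2}-\binom{m+1}{2}-\binom{n+1}{2}=mn$.
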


To compute the resultants of dynatomic polynomials of $x^d$, it is helpful to know the resultants of cyclotomic polynomials. Lehmer \cite{Lehmer30}, Diederichsen \cite{Diederichsen40}, Apostol \cite{Apostol70}, Louboutin \cite{Louboutin97}, and Dresden \cite{Dresden12} have provided different proofs for the resultants of cyclotomic polynomials. We omit the proofs and state the result below.

\begin{theorem}
    For integers $0<n<m$, the resultant of $\Phi_n$ and $\Phi_m$ is
    $$\res(\Phi_n,\Phi_m) = \begin{cases}
        p^{\varphi(n)} &\text{ if }m/n\text{ is a power of a prime }p,\\
        1 &\text{otherwise},
    \end{cases}$$
    where $\varphi(\cdot)$ denotes Euler's totient function.
\end{theorem}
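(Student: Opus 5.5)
Since the roots of $\Phi_m$ are the primitive $m$-th roots of unity, I will prove the formula through Proposition 2.1(1) by evaluating $\Phi_n$ at a primitive $m$-th root of unity. Write $\res(\Phi_n,\Phi_m)=\prod_{\zeta}\Phi_n(\zeta)$ up to sign, where $\zeta$ runs over primitive $m$-th roots of unity and $\Phi_n$ is monic. Then I will use the product formula $\Phi_n(x)=\prod_{k\mid n}(x^k-1)^{\mu(n/k)}$, taking limits (or using the factorization $x^n-1=\prod_{k\mid n}\Phi_k$) to handle factors that vanish.

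\textbf{Case $n\nmid m$.} First, if $m/n$ is not an integer, no root of $\Phi_n$ is a root of $\Phi_m$, so the resultant is a nonzero integer. The cleanest way to compute its value is to show that $\Phi_n(\zeta)$ is a unit for every primitive $m$-th root $\zeta$ whenever $m/n$ is not a prime power. I will use the classical evaluation: for a primitive $r$-th root of unity $\eta$ with $r>1$, $\Phi_s(\eta)$ depends on $s/r$, and $\prod_{\zeta}\Phi_n(\zeta)=\prod_{\eta}\Phi_m(\eta)$ up to sign. Concretely, I will reduce to the norm computation
\[
\prod_{\zeta \text{ prim. } m\text{-th}} \Phi_n(\zeta)=\prod_{k\mid n}\Big(\prod_{\zeta}(\zeta^k-1)\Big)^{\mu(n/k)}.
\]
Here $\zeta^k$ is a primitive $m/\gcd(m,k)$-th root of unity, each hit $\varphi(m)/\varphi(m/\gcd(m,k))$ times. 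Then $\prod_{\eta \text{ prim. } r}(\eta-1)=\pm\Phi_r(1)$, and this equals $p$ if $r$ is a power of a prime $p$, $1$ if $r$ is not a prime power and $r>1$, and $0$ if $r=1$. The case $r=1$ occurs only when $m\mid k$, which is impossible since $k\le n<m$.

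\textbf{Möbius sum.} The resultant therefore becomes $\prod_{p}p^{e_p}$ with
\[
e_p=\sum_{k\mid n,\; m/\gcd(m,k)\text{ a power of }p}\mu(n/k)\,\frac{\varphi(m)}{\varphi(m/\gcd(m,k))}.
\]
The main obstacle is evaluating this Möbius sum. I plan to split it by prime. Write $m=p^a m'$ and $n=p^b n'$ with $p\nmid m'n'$. The condition requires $m'\mid k$, so it forces $m'\mid n'$. The sum then factors into an $n'$-part, $\sum_{m'\mid k'\mid n'}\mu(n'/k')$, which vanishes unless $n'=m'$, times a $p$-part over $k=p^c$ with $c\le b$. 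The $p$-part is $\mu$-weighted over at most the two terms $c=b$ and $c=b-1$. When $n'=m'$ (that is, $m/n=p^{a-b}$ with $a>b$), I expect the $p$-part to simplify to $\varphi(n)$ after inserting $\varphi(m)/\varphi(p^{a-c})$. I will check the subcases $b=0$ and $b\ge1$ separately, since $\varphi(p^{a-c})$ behaves differently there. When $n'\ne m'$, $e_p=0$ for every $p$, so the resultant is $1$.

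\textbf{Sign.} Finally, positivity follows because the product over conjugate pairs makes each factor a norm. Alternatively, the degrees $\varphi(n),\varphi(m)$ are even for $n,m>2$, and the small cases $n\in\{1,2\}$ can be checked directly.
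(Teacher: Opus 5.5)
The paper gives no proof of Theorem 2.2. It states the result and refers to Lehmer, Diederichsen, Apostol, Louboutin and Dresden. Your proposal is a correct, self-contained proof by the classical norm computation, so the difference is that you actually prove what the paper cites. The key steps check out:
\begin{itemize}
\item Since $k\le n<m$, you have $\zeta^k\neq 1$, so the M\"obius product for $\Phi_n$ may be evaluated at each primitive $m$-th root $\zeta$.
\item The fibre count $\varphi(m)/\varphi(r)$ is right, because $\zeta\mapsto\zeta^k$ is the reduction $(\zz/m\zz)^\times\to(\zz/r\zz)^\times$ followed by multiplication by a unit.
\item The exponent $e_p$ factors as you describe, and the $n'$-part kills it unless $n'=m'$, i.e.\ unless $m/n$ is a power of $p$.
\end{itemize}
Your route costs a page of bookkeeping where the paper spends a citation. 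In exchange it uses only facts the paper already recalls in Section 3 (M\"obius inversion and the values $\Phi_r(1)$), so it would make the paper self-contained.

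A few loose ends should be tightened.
\begin{itemize}
\item The heading ``Case $n\nmid m$'' and the remark that $\Phi_s(\eta)$ depends on $s/r$ are unnecessary. Your norm computation is uniform for all $n<m$.
\item Finish the $p$-part instead of ``expecting'' it. The conditions $m'\mid n'$ and $n<m$ force $b<a$, so $a-c\geq 1$ for every $c\le b$ and $\varphi(m)/\varphi(p^{a-c})=\varphi(m')p^{c}$. Hence $e_p=\varphi(m')\sum_{c=0}^{b}\mu(p^{b-c})p^{c}=\varphi(m')\varphi(p^b)=\varphi(n)$, with no need to split into $b=0$ and $b\geq 1$.
\item Even degrees alone do not give positivity. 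One fix is to pair each primitive $m$-th root with its complex conjugate. For $m\geq 3$ this gives $\prod_\zeta\Phi_n(\zeta)>0$, and since $\varphi(m)$ is even, Proposition 2.1(1) introduces no sign. The case $m=2$ is just $\res(x-1,x+1)=2$.
\item Alternatively, track the sign exactly. Your computation gives the overall sign $(-1)^{\varphi(m)\left(\varphi(n)+\sum_{k\mid n}\mu(n/k)\right)}$. The exponent is always even: for $n=1$ the bracket is $2$; for $n=2$ we have $m\geq 3$, so $\varphi(m)$ is even; and for $n\geq 3$, $\varphi(n)$ is even while the M\"obius sum vanishes.
\end{itemize}
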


Note that $\deg\Phi_n = \varphi(n)$ is even for $n\geq 3$. Hence, for integers $0<m<n$ with $n\geq 3$, Proposition 2.1(2) gives $\res(\Phi_m,\Phi_n) = \res(\Phi_n,\Phi_m)$. On the other hand, $\res(\Phi_2,\Phi_1) = -\res(\Phi_1,\Phi_2) = -2$.

\section{Basic properties of dynatomic polynomials for $x^d$}

In this section, we study the dynatomic polynomials of $\phi(x) = x^d$, where $d\geq 2$. We first recall the definition of the dynatomic polynomial of $\phi(x)$:
$$\Phi_{\phi,n}(x) = \prod_{k\mid n}(\phi^k(x)-x)^{\mu(n/k)} = \prod_{k\mid n}(x^{d^k}-x)^{\mu(n/k)},$$
where $\mu(\cdot)$ is the Möbius function. For convenience, we denote $\Phi_{\phi,n}$ by $\Phi_{d,n}$.

We recall the following properties of Euler's totient function and the Möbius function. For every positive integer $n$, we have
$$\sum_{k\mid n}\varphi(k) = n,$$
and
$$\sum_{k\mid n}\mu(k) = \begin{cases}
    1&\text{ if }n = 1,\\
    0&\text{ if }n>1.
\end{cases}$$
We also recall the following multiplicative version of the Möbius inversion formula. Let $f$ and $g$ be arithmetic functions. Then, for every positive integer $n$, the following equivalent holds:
$$g(n) = \prod_{k\mid n}f(k)\Leftrightarrow f(n) = \prod_{k\mid n}g(k)^{\mu(n/k)}.$$
We refer the reader to \cite[Chapter 2]{Apostol76} for proofs of these results.

In Section 5.1 of \cite{Vivaldi92}, a decomposition of $\Phi_{2,n}$ is given for $n\geq 1$. In this section, we prove that $\Phi_{d,n}$ factors into a product of cyclotomic polynomials for all $d,n\geq 2$.

\begin{lemma}
    For every $d,n\geq 2$,
    $$\Phi_{d,n}(x) = \prod_{\ord_k(d) = n}\Phi_k(x),$$
    where $\ord_k(d)$ denotes the multiplicative order of $d$ modulo $k$.
\end{lemma}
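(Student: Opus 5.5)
We would first factor each term $x^{d^k}-x$ into cyclotomic polynomials, and then apply the multiplicative Möbius inversion recalled above. For $k\geq 1$ we write $x^{d^k}-x = x\,(x^{d^k-1}-1) = x\prod_{j\mid d^k-1}\Phi_j(x)$. The divisors of $d^k-1$ are exactly the integers $j\geq 1$ with $\gcd(j,d)=1$ and $\ord_j(d)\mid k$. Here we use the convention $\ord_1(d)=1$, so $j=1$ is always included. Grouping by the value $e=\ord_j(d)$, set
$$\Psi_e(x) \coloneqq \prod_{\ord_j(d) = e}\Phi_j(x).$$
This product is finite, since $\ord_j(d)=e$ forces $j\mid d^e-1$. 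We then obtain
$$x^{d^k}-x = x\prod_{e\mid k}\Psi_e(x)\qquad(k\geq 1).$$

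Next we apply the multiplicative Möbius inversion with $g(k)=(x^{d^k}-x)/x$ and $f(e)=\Psi_e$. Both are arithmetic functions valued in the multiplicative group of $K(x)$, so the inversion formula applies verbatim. It gives
$$\prod_{k\mid n}\left(\frac{x^{d^k}-x}{x}\right)^{\mu(n/k)} = \Psi_n(x).$$
The extra factor of $x$ contributes $x^{\sum_{k\mid n}\mu(n/k)}$. For $n\geq 2$ this exponent is $0$ by the recalled identity $\sum_{k\mid n}\mu(k)=0$. Hence $\Phi_{d,n}(x)=\Psi_n(x)=\prod_{\ord_k(d)=n}\Phi_k(x)$, as claimed. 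For $n=1$ the factor $x$ would survive, which is why the statement excludes that case.

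The main thing to get right is the bookkeeping in the first step. We must check that grouping the divisors of $d^k-1$ by order gives a disjoint union over $e\mid k$. Each $j$ with $\gcd(j,d)=1$ has exactly one order, and $j\mid d^k-1$ if and only if $\ord_j(d)\mid k$; together these show each $\Phi_j$ appears exactly once. We also note that $\ord_k(d)$ is only meaningful when $\gcd(k,d)=1$, and these are exactly the indices that occur in the product. The hypothesis $d\geq 2$ guarantees that $d^k-1\geq 1$, so the factorization $x^{d^k-1}-1=\prod_{j\mid d^k-1}\Phi_j(x)$ is valid.
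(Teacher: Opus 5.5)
Your proof is correct and follows essentially the same route as the paper: both factor $x^{d^k}-x = x\prod_{j\mid d^k-1}\Phi_j(x)$, use $j\mid d^k-1 \iff \ord_j(d)\mid k$, and remove the factor $x$ via $\sum_{k\mid n}\mu(n/k)=0$ for $n\geq 2$. The only difference is packaging: the paper interchanges the double product and evaluates the inner M\"obius sum by hand, while you group the factors into $\Psi_e$ and apply the recalled multiplicative inversion formula, which carries out exactly that computation.
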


\begin{proof}
    Note that
    $$\Phi_{d,n}(x) = \prod_{m\mid n}(x^{d^m}-x)^{\mu(n/m)} \text{ and }x^{d^m-1}-1 = \prod_{k\mid d^m-1}\Phi_k(x).$$
    Hence, we obtain
    \begin{align*}
        \Phi_{d,n} &= \prod_{m\mid n}\left(x\prod_{k\mid d^m-1}\Phi_k\right)^{\mu(n/m)}\\
        &= x^{\sum_{m\mid n}\mu(n/m)}\cdot\prod_{m\mid n}\prod_{k\mid d^m-1}\Phi_k^{\mu(n/m)}\\
        &= \prod_{m\mid n}\prod_{k\mid d^m-1}\Phi_k^{\mu(n/m)}
    \end{align*}
    since $\sum_{m\mid n}\mu(n/m) = 0$ when $n\geq 2$. 
    
    Fix an integer $n$ and a divisor $m$ of $n$. Then $k\mid d^m-1$ implies that $k\mid d^n-1$. Conversely, fix a divisor $k$ of $d^n-1$, we wish to determine the divisors $m$ such that $k\mid d^m-1$. Note that $k\mid d^m-1$ if and only if $\ord_k(d)\mid m$. Letting $m = \ord_k(d)\cdot t$, the divisor $m$ divides $n$ if and only if $t$ divides $n/\ord_k(d)$. Hence, we obtain
    $$\Phi_{d,n} = \prod_{m\mid n}\prod_{k\mid d^m-1}\Phi_k^{\mu(n/m)} = \prod_{k\mid d^n-1}\prod_{t\mid \frac{n}{\ord_k(d)}}\Phi_k^{\mu(\frac{n}{\ord_k(d)\cdot t})}.$$
    Using the property of the Möbius function, we obtain
    $$\prod_{t\mid \frac{n}{\ord_k(d)}}\Phi_k^{\mu(\frac{n}{\ord_k(d)\cdot t})} = \begin{cases}
        \Phi_k &\text{ if }n = \ord_k(d)\\
        1&\text{ if }n\neq \ord_k(d).
    \end{cases}$$
    Since $\ord_k(d) = n$ implies that $k\mid d^n-1$, we conclude that $\Phi_{d,n} = \prod_{\ord_k(d) = n}\Phi_k$.
\end{proof}

\begin{remark}
    Morton and Patel \cite[Theorem 2.5]{Morton94} proved that if $\phi(x)\in K[x]$ with $\deg\phi(x)\geq 2$, then 
    $$\Phi_{\phi,n}(x) = \prod_{k\mid n}(\phi^k(x)-x)^{\mu(n/k)}$$
    is also a polynomial. In the special case $\phi(x) = x^d$, Lemma 3.1 yields an independent proof of this fact by expressing $\Phi_{d,n}$ as a product of cyclotomic polynomials.
\end{remark}

Since $\Phi_n(x) = \prod_{k\mid n}(x^k-1)^{\mu(n/k)}$, the dynatomic polynomials $\Phi_{d,n}$ may be viewed as dynamical analogues of cyclotomic polynomials. The following proposition gives dynamical analogues of well-known properties of cyclotomic polynomials.

\begin{proposition}For integers $d,n\geq 2$, the following hold.
    \begin{enumerate}
        \item $\deg\Phi_{d,n}(x) = \sum_{k\mid n}\mu(n/k)d^k = \sum_{\ord_k(d) = n}\varphi(k)$.
        \item $x^{d^n}-x = \prod_{k\mid n}\Phi_{d,k}(x)$.
        \item $\Phi_{d,n}(x)$ is a palindromic polynomial of even degree.
    \end{enumerate}
\end{proposition}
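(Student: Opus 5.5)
For part (1), I will compute the degree directly from the definition. Since $\deg(x^{d^k}-x)=d^k$ and degrees are additive under products and quotients of nonzero polynomials, the Möbius product gives
$$\deg\Phi_{d,n}=\sum_{k\mid n}\mu(n/k)d^k.$$
For the second expression, I take degrees in Lemma 3.1 and use $\deg\Phi_k=\varphi(k)$. This gives $\sum_{\ord_k(d)=n}\varphi(k)$.

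For part (2), I will apply the multiplicative Möbius inversion formula recalled above. The definition says $f(n)=\prod_{k\mid n}g(k)^{\mu(n/k)}$ with $g(k)=x^{d^k}-x$ and $f=\Phi_{d,\cdot}$, so the inversion yields $g(n)=\prod_{k\mid n}f(k)$. This is an identity in the field of rational functions $K(x)$, and all the quantities are nonzero there, so the inversion formula for multiplicative group-valued arithmetic functions applies. One small check: the formula needs $\Phi_{d,1}(x)=x^d-x$, which is consistent with the definition, since the only divisor of $1$ is $k=1$. Alternatively, one can verify (2) directly from Lemma 3.1. Write $x^{d^n}-x=x\prod_{k\mid d^n-1}\Phi_k$ and group the indices $k$ by $\ord_k(d)$; these orders are exactly the divisors of $n$. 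The only subtlety is the order-$1$ group: $\ord_k(d)=1$ means $k\mid d-1$, and $x\prod_{k\mid d-1}\Phi_k=x^d-x=\Phi_{d,1}$.

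For part (3), I use Lemma 3.1 again. Each $\Phi_k$ is palindromic for $k\ge 2$, since $x^{\varphi(k)}\Phi_k(1/x)=\Phi_k(x)$ (its roots are closed under inversion and the constant term is $1$). A product of palindromic polynomials is palindromic. Moreover, $\varphi(k)$ is even for $k\ge3$.

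The main obstacle is therefore to show that $k=1$ and $k=2$ never occur in the product. The case $k=1$ has $\ord_1(d)=1\ne n$, so it does not appear. The case $k=2$ requires $d$ odd, and then $\ord_2(d)=1\neq n$, so it does not appear either. Hence every $k$ in the product satisfies $k\ge3$, the degree is a sum of even numbers, and the product is palindromic.
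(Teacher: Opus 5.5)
Your proof is correct. Parts (1) and (2), and the palindromicity in (3), follow the paper's argument.

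The one real difference is how you show the degree is even. The paper uses the first formula in (1) and checks parity directly: for even $d$, each $d^k$ is even; for odd $d$, it uses $\sum_{k\mid n}\mu(n/k)d^k\equiv\sum_{k\mid n}\mu(n/k)=0 \pmod 2$. You instead use the second formula and show that every index in Lemma 3.1 satisfies $k\ge 3$, so each $\varphi(k)$ is even.

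Your route costs a small case check on $k=1,2$. That check pays off, because it is exactly what the palindromicity claim needs. $\Phi_1=x-1$ is not palindromic, and the paper's sentence ``each cyclotomic polynomial is palindromic'' silently relies on $\Phi_1$ never occurring in the factorization. The paper's parity argument, for its part, gets evenness without using the cyclotomic factorization at all.
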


\begin{proof}~
    \begin{enumerate}
        \item By definition, $\Phi_{d,n}(x) = \prod_{k\mid n}(x^{d^k}-x)^{\mu(n/k)}$. Since $d^k>1$ for every $k$, the first equality follows immediately. For the second equality, since the degree of the $n$-th cyclotomic polynomial is $\varphi(n)$ and $\Phi_{d,n}(x) = \prod_{\ord_k(d) = n}\Phi_k(x)$, we obtain $\deg \Phi_{d,n}(x) = \sum_{\ord_k(d) = n}\varphi(k)$.
        \item Since $\Phi_{d,n}(x) = \prod_{k\mid n}(x^{d^k}-x)^{\mu(n/k)}$, the result follows from the multiplicative version of the Möbius inversion formula.
        \item Note that $\deg\Phi_{d,n}(x) = \sum_{k\mid n}\mu(n/k)d^k$. If $d$ is even, it is clear that $\deg\Phi_{d,n}$ is even. If $d$ is odd, then
        $$\sum_{k\mid n}\mu(n/k)d^k\equiv \sum_{k\mid n}\mu(n/k) = 0\pmod 2.$$
        Therefore, the degree of $\Phi_{d,n}$ is even.

        Since $\Phi_{d,n}$ is a product of cyclotomic polynomials, and each cyclotomic polynomial is palindromic, it follows that $\Phi_{d,n}(x)$ is also palindromic.
    \end{enumerate}
\end{proof}

For every $n\geq 1$, the cyclotomic polynomial $\Phi_n$ is irreducible over $\qq$. In contrast, for integers $d,n\geq 2$, the polynomial $\Phi_{d,n}$ is not always irreducible over $\qq$. We provide the following characterization of the irreducibility of $\Phi_{d,n}$. Before proving this proposition, we recall the following theorem of Zsigmondy \cite{Zsigmondy92}.

\begin{theorem}
    Let $a>b>0$ be coprime integers, and let $n\geq 1$. Then, there exists a prime divisor of $a^n-b^n$ that does not divide $a^k-b^k$ for any $1\leq k<n$, with the following exceptions:
    \begin{enumerate}
        \item $n = 1$, and $a-b = 1$,
        \item $n = 2$, and $a+b$ is a power of $2$,
        \item $(a,b,n) = (2,1,6)$.
    \end{enumerate}
\end{theorem}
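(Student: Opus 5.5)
The statement is Zsigmondy's classical theorem, which the paper only quotes. If I had to supply a proof, I would use the standard argument through homogenized cyclotomic polynomials. Set $\Phi_n(a,b)=b^{\varphi(n)}\Phi_n(a/b)=\prod_{\zeta}(a-\zeta b)$, where $\zeta$ runs over the primitive $n$-th roots of unity. This is an integer, and $a^n-b^n=\prod_{k\mid n}\Phi_k(a,b)$. A prime $p$ is a primitive divisor of $a^n-b^n$ exactly when $p\nmid b$ and the order of $ab^{-1}$ modulo $p$ is $n$. So the goal is to show that $\Phi_n(a,b)$ has such a prime factor outside the listed exceptions. For $n=1$ any prime factor of $a-b$ works, which explains exception (1). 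I will therefore assume $n\geq 2$.

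\textbf{Arithmetic step.} Let $p\mid\Phi_n(a,b)$. Since $\gcd(a,b)=1$, we have $p\nmid ab$. Let $e$ be the order of $ab^{-1}$ modulo $p$, so $e\mid n$.

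If $e<n$, I will show that $p\mid n$, that $p$ is the largest prime factor of $n$, and that $n=ep^j$. The tool is the lifting-the-exponent lemma applied to $x^{n}-1$ at $x\equiv ab^{-1}$.

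I will then show that $v_p(\Phi_n(a,b))=1$, except in the case $p=2$, $n=2$. The reason is that $v_p(a^{ep^j}-b^{ep^j})-v_p(a^{ep^{j-1}}-b^{ep^{j-1}})=1$ for odd $p$ (and likewise for $p=2$, $j\geq2$), while the other cyclotomic factors contribute nothing.

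Consequently, if $\Phi_n(a,b)$ has no primitive prime divisor, then $|\Phi_n(a,b)|$ divides $P$, where $P$ is the largest prime factor of $n$. Hence $|\Phi_n(a,b)|\leq P\leq n$, again with the case $n=2$ treated separately.

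\textbf{Size step.} For $n\geq3$ I will bound $|\Phi_n(a,b)|$ from below. Each factor satisfies $|a-\zeta b|\geq a-b\geq 1$. Moreover, for $\zeta\neq 1$ we have $|a-\zeta b|>a-b$, and the factors can be estimated more sharply via $|a-\zeta b|^2=a^2+b^2-2ab\cos\theta$.

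This should give $|\Phi_n(a,b)|>P$ except for a finite list of small cases. For instance, $|\Phi_n(a,b)|\geq (a-b)^{\varphi(n)}\cdot\prod_{\zeta}|1-\zeta b/a|$-type bounds, or the cruder inequality $|a-\zeta b|\geq\sqrt{a^2-ab+b^2}$ whenever $\mathrm{Re}\,\zeta\leq 1/2$.

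The remaining small cases (small $a$, $b=1$, small $n$) can be checked directly. This check is where $(a,b,n)=(2,1,6)$ appears: $\Phi_6(2,1)=3$, and $3$ is the largest prime factor of $6$.

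For $n=2$, we have $\Phi_2(a,b)=a+b$. A primitive prime is an odd prime dividing $a+b$; the prime $2$ is not primitive when $a,b$ are both odd, since then $2\mid a-b$. So a primitive divisor fails to exist exactly when $a+b$ is a power of $2$, which is exception (2).

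\textbf{Main obstacle.} I expect the main obstacle to be making the lower bound in the size step uniform enough to beat $P$ for every $n\geq 3$ except $(2,1,6)$. The delicate regime is $b=1$, $a=2$, where the individual factors $|2-\zeta|$ are only slightly above $1$ for $\zeta$ near $1$.

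My first attempt will be to pair $\zeta$ with $\bar\zeta$, and to use that at least $\varphi(n)/2$ of the roots have $\mathrm{Re}\,\zeta\leq 0$, where $|a-\zeta b|\geq\sqrt{a^2+b^2}$. That gives roughly $|\Phi_n(a,b)|\geq (a^2+b^2)^{\varphi(n)/4}$, which exceeds $P$ once $\varphi(n)$ is moderately large. I would then finish by listing the finitely many remaining $n$ (with $\varphi(n)$ small) and verifying them by hand.
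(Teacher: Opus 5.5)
The paper does not prove this statement; it is quoted from Zsigmondy, so your outline has to stand on its own. Its skeleton is the standard one and is sound. The arithmetic step is correct: a non-primitive prime factor of $\Phi_n(a,b)$ must be the largest prime factor $P$ of $n$, and for $n\geq 3$ it divides $\Phi_n(a,b)$ only to the first power. Your treatment of $n=1$ and $n=2$ is also correct.

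The gap is the size step, which carries the real content of the theorem, and the tool you propose for it is false. It is not true that at least $\varphi(n)/2$ of the primitive $n$-th roots of unity satisfy $\mathrm{Re}\,\zeta\leq 0$. For $n=6$ none do, for $n=14$ only $2$ of $6$ do, and for $n=22$ only $4$ of $10$. Hence the bound $(a^2+b^2)^{\varphi(n)/4}$ is not established, and you never obtain an explicit threshold beyond which $\Phi_n(a,b)>P$. Without it, the assertion that only finitely many cases remain, all with $b=1$, is unjustified.

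The delicate regime is in fact $a-b=1$ with $b$ arbitrary, so restricting the leftover check to $b=1$ would miss cases. For $a-b\geq 2$ there is nothing to do: $\Phi_n(a,b)>(a-b)^{\varphi(n)}\geq 2^{P-1}\geq P$, since $P\mid n$ gives $\varphi(n)\geq P-1$.

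You can close the gap without counting roots in half-planes. Write $n=P^km$ with $P\nmid m$, and put $x=a^{P^{k-1}}$, $y=b^{P^{k-1}}$. Then $\Phi_n(a,b)=\Phi_{Pm}(x,y)=\Phi_m(x^P,y^P)/\Phi_m(x,y)$.

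If $m=1$, this equals $\sum_{i=0}^{P-1}x^iy^{P-1-i}>P$.

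If $m\geq 2$, then $m$ has a prime factor smaller than $P$, so $P\geq 3$. Comparing factors over the primitive $m$-th roots $\zeta$ gives
\[
\Phi_n(a,b)=\prod_{\zeta}\frac{|x^P-\zeta y^P|}{|x-\zeta y|}\geq\Bigl(\frac{x^P-y^P}{x+y}\Bigr)^{\varphi(m)}\geq \bigl(x^{P-2}\bigr)^{\varphi(m)},
\]
where the last step uses $x-y\geq 1$.

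For $m\geq 3$ this is at least $2^{2(P-2)}>P$.

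For $m=2$ one has $\Phi_n(a,b)=(x^P+y^P)/(x+y)$. When $P\geq 5$ this exceeds $x^{P-1}/2\geq 2^{P-2}>P$. When $P=3$ it equals $(x-y)^2+xy\geq 3$, with equality only when $(x,y)=(2,1)$, i.e.\ $(a,b,n)=(2,1,6)$.

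This yields $\Phi_n(a,b)>P$ in every non-exceptional case with $n\geq 3$, which is exactly what your arithmetic step requires.
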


\begin{proposition}
    $\Phi_{d,n}(x)$ is irreducible over $\qq$ if and only if $d^n-1$ is a prime number.
\end{proposition}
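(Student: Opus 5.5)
By Lemma 3.1, $\Phi_{d,n}(x)=\prod_{\ord_k(d)=n}\Phi_k(x)$. Each $\Phi_k$ is irreducible over $\qq$, and each has degree $\varphi(k)\ge 1$. So $\Phi_{d,n}$ is irreducible if and only if exactly one integer $k$ satisfies $\ord_k(d)=n$. The plan is to show this happens precisely when $d^n-1$ is prime.

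\emph{($\Leftarrow$)} Suppose $p=d^n-1$ is prime. Any $k$ with $\ord_k(d)=n$ divides $d^n-1=p$, so $k\in\{1,p\}$. The case $k=1$ is impossible, since $\ord_1(d)=1\neq n$ as $n\ge 2$. So $k=p$ is the only candidate. It does qualify: if $d^m\equiv 1\pmod p$ for some $1\le m<n$, then $p\mid d^m-1$, which is absurd because $0<d^m-1<d^n-1=p$. Hence $\Phi_{d,n}=\Phi_p$, which is irreducible.

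\emph{($\Rightarrow$)} Suppose $d^n-1$ is composite. I exhibit two distinct $k$ with $\ord_k(d)=n$. The first is $k_1=d^n-1$ itself: it divides $d^n-1$, and no smaller power works, by the size argument above. The second should be a primitive prime divisor $p$ of $d^n-1$ supplied by Zsigmondy's theorem (Theorem 3.3) with $a=d$, $b=1$. Then $p\mid d^n-1$ and $p\nmid d^m-1$ for all $m<n$, so $\ord_p(d)=n$. Since $d^n-1$ is composite, $p\neq d^n-1$, so $k_1$ and $p$ are two distinct indices. This makes $\Phi_{d,n}$ a product of at least two nonconstant factors, hence reducible.

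The main obstacle is the exceptional cases of Zsigmondy's theorem.
\begin{itemize}
\item Case (1) needs $n=1$, which is excluded since $n\ge 2$.
\item Case (2) is $n=2$ with $d+1=2^s$. Then $d^2-1=(d-1)2^s$ with $d$ odd and $d\ge 3$, so $d^2-1$ is composite. Here I would take $k=4$ as the second index. Since $d\equiv 3\pmod 4$ (indeed $d+1\equiv 0\pmod 4$ when $s\ge 2$), we get $\ord_4(d)=2$. Also $4\neq d^2-1$, because $d^2-1\ge 8$.
\item Case (3) is $(d,n)=(2,6)$, where $d^6-1=63$ is composite. Here I would check by hand that $\ord_9(2)=6$ and $\ord_{63}(2)=6$, giving the two indices $9$ and $63$.
\end{itemize}
With these cases handled, the equivalence follows.
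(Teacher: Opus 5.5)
Your proof is correct and is essentially the paper's argument. Lemma 3.1 reduces irreducibility to there being a unique $k$ with $\ord_k(d)=n$, and $k=d^n-1$ always qualifies. Zsigmondy's theorem (Theorem 3.4 in the paper, not 3.3) supplies a second index when $d^n-1$ is composite. The exceptional cases are handled with $k=4$ and with $k\in\{9,63\}$, exactly as the paper does. You also spell out details the paper leaves implicit, such as why $\ord_{d^n-1}(d)=n$ and why $\Phi_{d,n}=\Phi_{d^n-1}$ when $d^n-1$ is prime.
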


\begin{proof}
    Note that $\Phi_{d,n} = \prod_{\ord_k(d) = n}\Phi_k(x)$. If $d^n-1$ is a prime number, then $\Phi_{d,n}(x) = \Phi_{d^n-1}(x)$, which is irreducible over $\qq$. In particular, $d^n-1$ can be prime only if $d = 2$ and $n$ is a prime number, that is, $d^n-1$ is a Mersenne prime.

    Conversely, suppose that $d^n-1$ is not a prime number. If the triple $(d,1,n)$ is not one of the exceptional cases in Zsigmondy's theorem, then there exists a prime $p\mid d^n-1$ and $p\nmid d^k-1$ for all $1\leq k<n$. Thus, $\ord_p(d) = n$. Since $\ord_{d^n-1}(d) = n$, it follows that $\Phi_p\cdot\Phi_{d^n-1}\mid \Phi_{d,n}$.  If $(d,1,n) = (2,1,6)$, then $\Phi_{2,6} = \Phi_{63}\cdot\Phi_{21}\cdot\Phi_9$ which is reducible over $\qq$. If $n = 2$ and $d+1 = 2^c$ for some $c\geq 2$, then $d-1 = 2^c-2$ and so $d^2-1 = 2^{c+1}(2^{c-1}-1)$. Hence, $4\mid d^2-1$ and $4\nmid d-1$, which implies that $\ord_4(d) = 2$. Thus, $\Phi_4\cdot\Phi_{d^2-1}\mid \Phi_{d,n}$.
\end{proof}

In the remainder of this section, we compute the values of $\Phi_{d,n}(x)$ at $x = 0$ and $x = 1$. Since $\Phi_{d,n}$ is a product of cyclotomic polynomials, we first recall the values of cyclotomic polynomials at $x = 0$ and $x = 1$. For $n\geq 2$, it is well known that $\Phi_n(0) = 1$. On the other hand,
\begin{align*}
    \Phi_n(1) = \begin{cases}
        1&\text{ if }n\text{ is not a prime power},\\
        p&\text{ if }n = p^a\text{ is a prime power with }a\geq 1.
    \end{cases}
\end{align*}
We refer the reader to \cite[Section 4.1]{Lang94} for a proof.

Note that $\Phi_{d,n}(1) = \prod_{\ord_k(d) = n}\Phi_k(1)$. Thus, for every prime divisor $p$ of $\Phi_{d,n}(1)$, computing $v_p(\Phi_{d,n}(1))$ amounts to determining the number of positive integers $a$ such that $\ord_{p^a}(d) = n$. Define
$$S(p,d,n)\coloneqq\{a\geq 1:\ord_{p^a}(d) = n\}.$$
We prove that $v_p(\Phi_{d,n}(1)) = |S(p,d,n)| = v_p(\Phi_n(d))$.

We first recall the lifting-the-exponent lemma. We refer the reader to \cite{Par11} for further details. For completeness, we include a proof. Let $p$ be a prime number, and let $n$ be a positive integer, we denote the valuation of $n$ at $p$ by $v_p(n)$.

\begin{lemma}[lifting-the-exponent lemma]
    Let $p$ be a prime number, and let $x,y$ be integers such that $p\nmid x$ and $p\nmid y$.
    \begin{enumerate}
        \item If $p\geq 3$ and $x\equiv y\pmod p$, then
        $$v_p(x^k-y^k) = v_p(x-y)+v_p(k).$$
        \item If $p = 2$ and $x\equiv y\pmod 4$, then 
        $$v_2(x^k-y^k) = v_2(x-y)+v_2(k).$$
    \end{enumerate}
\end{lemma}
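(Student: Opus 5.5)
We prove the lifting-the-exponent lemma in two stages: first for $k=\ell$ a prime, then for general $k$ by multiplicativity. Throughout we use the factorization
$$x^k-y^k=(x-y)\left(x^{k-1}+x^{k-2}y+\cdots+y^{k-1}\right),$$
which gives $v_p(x^k-y^k)=v_p(x-y)+v_p(S_k)$ with $S_k=\sum_{i=0}^{k-1}x^{k-1-i}y^i$. So it suffices to show $v_p(S_k)=v_p(k)$ under the stated congruence hypotheses.

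\emph{Step 1: $p\nmid k$.} Since $x\equiv y\pmod p$, each term of $S_k$ is congruent to $x^{k-1}$, so $S_k\equiv kx^{k-1}\pmod p$. This is nonzero mod $p$ because $p\nmid x$ and $p\nmid k$. Hence $v_p(S_k)=0=v_p(k)$. This argument works for all primes, including $p=2$.

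\emph{Step 2: $k=p$.} We show $v_p(S_p)=1$. Write $y=x+t$ with $p\mid t$. Expanding $y^i=(x+t)^i\equiv x^i+ix^{i-1}t\pmod{t^2}$ gives
$$S_p\equiv \sum_{i=0}^{p-1}\left(x^{p-1}+ix^{p-2}t\right)=px^{p-1}+\frac{p(p-1)}{2}x^{p-2}t\pmod{t^2}.$$
\begin{itemize}
\item For $p\ge3$, $(p-1)/2$ is an integer, so the second term is divisible by $p\cdot t$, hence by $p^2$. Since $t^2$ is also divisible by $p^2$, we get $S_p\equiv px^{p-1}\pmod{p^2}$, and therefore $v_p(S_p)=1$.
\item For $p=2$, we have $S_2=x+y$. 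Since $x\equiv y\pmod 4$ and $x$ is odd, $x+y\equiv 2x\equiv 2\pmod 4$, so $v_2(S_2)=1$.
\end{itemize}
This case is the main obstacle: it is exactly where the extra hypothesis for $p=2$ (congruence mod $4$ rather than mod $2$) is needed.

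\emph{Step 3: induction.} Write $k=p^a m$ with $p\nmid m$. Apply Step 2 repeatedly to the pairs $(x^{p^j},y^{p^j})$ for $j=0,1,\dots,a-1$. This is legitimate because the hypotheses are preserved:
\begin{itemize}
\item $x^{p^j}\equiv y^{p^j}$ modulo $p$ (respectively modulo $4$ when $p=2$) follows from $x\equiv y$.
\item $p\nmid x^{p^j}$ and $p\nmid y^{p^j}$ since $p\nmid x,y$.
\end{itemize}
Each application raises the valuation by exactly one, so $v_p(x^{p^a}-y^{p^a})=v_p(x-y)+a$. Finally, apply Step 1 to the pair $(x^{p^a},y^{p^a})$ with exponent $m$. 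This adds $v_p(m)=0$, and we obtain
$$v_p(x^k-y^k)=v_p(x-y)+a=v_p(x-y)+v_p(k),$$
as claimed.
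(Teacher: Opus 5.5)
Your proof is correct and follows the same route as the paper: you dispose of the part of $k$ prime to $p$ via $S_k\equiv kx^{k-1}\pmod p$, then iterate the case $k=p$ (for $p=2$ this is the paper's factorization $x^{2^v}-y^{2^v}=(x-y)\prod_{i<v}(x^{2^i}+y^{2^i})$ with each factor $\equiv 2\pmod 4$). Your Step 2 is more careful than the paper for odd $p$: the paper only notes $S_p\equiv px^{p-1}\equiv 0\pmod p$, which gives just $v_p(S_p)\ge 1$, whereas your expansion modulo $t^2$ gives $S_p\equiv px^{p-1}\pmod{p^2}$ and hence the needed equality $v_p(S_p)=1$.
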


\begin{proof}
    For any prime number $p$, we first show that $v_p(x^k-y^k) = v_p(x-y)$ when $\gcd(p,k) = 1$. Note that
    $$x^k-y^k = (x-y)(x^{k-1}+x^{k-2}y+\cdots+xy^{k-2}+y^{k-1}),$$
    it is sufficient to show that $p$ does not divide $x^{k-1}+x^{k-2}y+\cdots+xy^{k-2}+y^{k-1}$. Since $x\equiv y\pmod p$, $p\nmid xy$, and $\gcd(p,k) = 1$, we have
    $$x^{k-1}+x^{k-2}y+\cdots+xy^{k-2}+y^{k-1}\equiv kx^{k-1}\not\equiv 0\pmod p.$$
    Hence, we can assume that $k$ is a power of $p$. We now consider separately the cases where $p$ is an odd prime and $p = 2$.
    \begin{enumerate}
        \item Suppose that $k = p$ is an odd prime number. Then 
        $$x^{p-1}+x^{p-2}y+\cdots+xy^{p-2}+y^{p-1}\equiv px^{p-1}\equiv 0\pmod p,$$
        which implies that $v_p(x^k-y^k) = v_p(x-y)+1$. Suppose now that $k = p^v$. Then $p\nmid x^{p^v}$, $p\nmid y^{p^v}$, $x^{p^v}\equiv y^{p^v}\pmod p$, we obtain
        $$v_p(x^{p^v}-y^{p^v}) = v_p(x^{p^{v-1}}-y^{p^{v-1}})+1 = \cdots = v_p(x-y)+v.$$
        \item Let $p = 2$. Since $2\nmid x$ and $2\nmid y$, it follows that $x^{2^i}\equiv y^{2^i}\equiv 1\pmod 4$ for all positive integers $i$. Moreover, since $4\mid x-y$, we have $x+y\equiv 2\pmod 4$. Thus, 
        $$v_2(x^{2^v}-y^{2^v}) = v_2(x-y)+\sum_{i = 0}^{v-1} v_2(x^{2^{i}}+y^{2^i}) =  v_2(x-y)+v.$$
    \end{enumerate}
\end{proof}

\begin{lemma}
    Let $d,n\geq 2$ be positive integers. Let $p$ be an odd prime divisor of $\Phi_n(d)$. If $p\nmid n$, then $\ord_p(d) = n$, and $v_p(\Phi_n(d)) = v_p(d^n-1)$. If $p\mid n$ and $n = n'\cdot p^k$ where $k = v_p(n)$, $\gcd(n',p) = 1$. Then $\ord_p(d) = n'$ and $v_p(\Phi_n(d)) = 1$.

    For $p = 2$, one has
    $$v_2(\Phi_n(d)) = \begin{cases}
        v_2(d+1),&\text{ if }n = 2,\\
        1,&\text{ if }n>2\text{ is a power of }2,\text{ and }d\text{ is odd},\\
        0,&\text{ otherwise.}
    \end{cases}$$
\end{lemma}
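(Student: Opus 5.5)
We work from the factorization $d^n-1=\prod_{k\mid n}\Phi_k(d)$ and compare $p$-adic valuations, using Lemma 3.5 (lifting the exponent) to control how $v_p(d^k-1)$ grows along multiples of $\ord_p(d)$.

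\textbf{Odd $p$, preliminaries.} Let $p$ be an odd prime dividing $\Phi_n(d)$. Then $p\mid d^n-1$, so $p\nmid d$, and $e:=\ord_p(d)$ divides $n$. By Lemma 3.5(1), applied to $x=d^e$, $y=1$, we get for every multiple $k=et$ of $e$
$$v_p(d^{k}-1)=v_p(d^e-1)+v_p(t),$$
while $v_p(d^k-1)=0$ if $e\nmid k$. Möbius inversion of $d^n-1=\prod_{k\mid n}\Phi_k(d)$ gives
$$v_p(\Phi_n(d))=\sum_{k\mid n}\mu(n/k)\,v_p(d^k-1).$$
Only divisors $k$ of $n$ with $e\mid k$ contribute. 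Write $n=e\cdot s$ and $k=e\cdot t$ with $t\mid s$, so that
$$v_p(\Phi_n(d))=\sum_{t\mid s}\mu(s/t)\bigl(v_p(d^e-1)+v_p(t)\bigr).$$

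\textbf{Odd $p$, evaluating the sum.} The constant term $v_p(d^e-1)\sum_{t\mid s}\mu(s/t)$ survives only when $s=1$. The sum $\sum_{t\mid s}\mu(s/t)v_p(t)$ is the Möbius inverse of $v_p$, i.e.\ the von Mangoldt-type function: it equals $1$ if $s$ is a positive power of $p$, and $0$ otherwise. Hence $v_p(\Phi_n(d))>0$ forces either $s=1$ (so $n=e$) or $s=p^j$ with $j\ge1$. Since $e\mid p-1$, we have $p\nmid e$.
\begin{itemize}
\item If $p\nmid n$, then $s$ cannot be a positive power of $p$, so $s=1$. Thus $\ord_p(d)=n$ and $v_p(\Phi_n(d))=v_p(d^n-1)$.
\item If $p\mid n$, then $s\ne1$ because $p\nmid e$. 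So $s=p^j$ with $j=k=v_p(n)$, which gives $e=n'$ and $v_p(\Phi_n(d))=1$.
\end{itemize}

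\textbf{The case $p=2$.} If $d$ is even, every $\Phi_n(d)$ is odd, which falls under the ``otherwise'' case. So let $d$ be odd; then $e=1$.
\begin{itemize}
\item If $n$ is not a power of $2$, take an odd prime $q\mid n$ and apply the same inversion to the odd part. Concretely, for odd $t$ we have $v_2(d^{2^a t}-1)=v_2(d^{2^a}-1)$, since $d^{2^a t}-1$ divided by $d^{2^a}-1$ is a sum of $t$ odd terms. Therefore the Möbius sum over the odd part of $n$ cancels, and $v_2(\Phi_n(d))=0$.
\item If $n=2$, then $\Phi_2(d)=d+1$, so $v_2(\Phi_2(d))=v_2(d+1)$.
\item If $n=2^a$ with $a\ge2$, then $\Phi_n(d)=d^{2^{a-1}}+1$. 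Since $d^{2^{a-1}}\equiv1\pmod 4$, this is $\equiv 2\pmod 4$, so $v_2=1$.
\end{itemize}
This avoids Lemma 3.5(2) altogether. The case $n=1$ is excluded by the hypothesis $n\ge2$.

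\textbf{Main obstacle.} The delicate step is bookkeeping the Möbius sum over divisors that are multiples of $e$, together with the observation $p\nmid e$. That observation is what pins down $e=n'$ in the case $p\mid n$.
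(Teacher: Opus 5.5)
Your proof is correct, but it is organized differently from the paper's. You do everything in one computation. With $e=\ord_p(d)$ and $n=es$, you apply Möbius inversion to $d^n-1=\prod_{k\mid n}\Phi_k(d)$ and use the lifting-the-exponent lemma (Lemma 3.6 in the paper's numbering, not 3.5). This shows that $v_p(\Phi_n(d))$ equals $v_p(d^e-1)$ if $s=1$, equals $1$ if $s$ is a positive power of $p$, and is $0$ otherwise. The observation $p\nmid e$ (from $e\mid p-1$) then settles both subcases at once.

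The paper instead splits the odd case. For $p\nmid n$ it forces $\ord_p(d)=n$ by a separability argument: $x^n-1$ has no repeated roots over $\ff_p$, so $d$ cannot be a common root of $\Phi_n$ and some $\Phi_t$ with $t<n$. For $p\mid n$ it uses $\Phi_{n'p^k}(x)=\Phi_{n'}(x^{p^k})/\Phi_{n'}(x^{p^{k-1}})$ and $d^p\equiv d\pmod p$ to reduce to the first case, after which LTE makes the difference of valuations telescope to $1$.

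Your route avoids cyclotomic identities and finite-field reasoning, and it yields more. It gives a closed formula for $v_p(\Phi_n(d))$ for every odd $p\nmid d$, hence an exact description of which odd primes divide $\Phi_n(d)$; this is essentially the information the paper later recovers via Lemmas 3.8 and 3.9. The paper's version is more hands-on, but it has to feed the first case back into the second.

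For $p=2$ you show that $v_2(d^k-1)$ depends only on the $2$-part of $k$, and let the Möbius sum over the odd part of $n$ vanish. The paper just reduces modulo $2$, using $\Phi_n(0)=1$ and the oddness of $\Phi_n(1)$ when $n$ is not a power of $2$. Both work.

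Two cosmetic points. For $d$ even and $n=2$, the relevant case of the formula is $v_2(d+1)$, not ``otherwise'', though the value is $0$ either way. Also, the odd prime $q$ you choose in the non-$2$-power case plays no role.
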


\begin{proof}
    We first consider the case where $p\nmid n$. Since $p\mid \Phi_n(d)$ and $\Phi_n(d)\mid d^n-1$, it follows that $p\mid d^n-1$. Let $m = \ord_p(d)$. Then $m\mid n$. Moreover, $d^m-1 = \prod_{t\mid m}\Phi_t(d)$ and hence $p\mid \Phi_t(d)$ for some divisor $t$ of $m$. If $m<n$, then $t<n$. Consider $x^n-1$ in $\ff_p[x]$. Since $p\mid\Phi_n(d)$ and $p\mid \Phi_t(d)$, the element $d$ is a common root of $\Phi_n(x)$ and $\Phi_t(x)$. Hence, $d$ is a multiple root of $x^n-1$. However since $p\nmid n$, we have $\gcd(x^n-1,nx^{n-1}) = 1$, contradicting that $x^n-1$ has a multiple root. Thus, $m = n$. Consequently, for every proper divisor $r$ of $n$, we have $p\nmid\Phi_r(d)$. It follows that 
    $$v_p(d^n-1) = \sum_{r\mid n}v_p(\Phi_r(d)) = v_p(\Phi_n(d)).$$

    Next, suppose $p\mid n$, and $n = n'\cdot p^k$ where $k = v_p(n), \gcd(n',p) = 1$. Note that
    $$\Phi_n(x) = \Phi_{n'\cdot p^k}(x) = \Phi_{n'\cdot p}(x^{p^{k-1}}) = \Phi_{n'}(x^{p^k})/\Phi_{n'}(x^{p^{k-1}}).$$
    Hence, $\Phi_n(d) = \Phi_{n'}(d^{p^k})/\Phi_{n'}(d^{p^{k-1}})$. If $p\mid \Phi_n(d)$, then $p$ also divides $\Phi_{n'}(d^{p^k})$. Thus,
    $$\Phi_{n'}(d^{p^k})\equiv \Phi_{n'}(d^{p^{k-1}})\equiv \cdots\equiv \Phi_{n'}(d)\equiv 0\pmod p.$$
    Applying the result proved above, since $p\nmid n'$ and $p\mid \Phi_{n'}(d)$, it follows that $\ord_p(d) = n'$. Since $\gcd(n',p) = 1$ and $p\mid \Phi_{n'}(d)$, we can deduce that $p\mid d^{n'}-1$. According to Lemma 3.6, $v_p(d^{n'p}-1) = v_p(d^{n'}-1)+1$, which implies that $p\mid d^{n'p}-1$. Applying the same argument repeatedly, we obtain $v_{p}(d^{n'p^2}-1) = v_p(d^{n'p}-1)+1$. By induction, $v_p(d^{n'p^i}-1) = v_p(d^{n'p^{i-1}})+1$ for all $i\geq 1$. Hence,
    $$v_p(\Phi_n(d)) = v_p(\Phi_{n'}(d^{p^k}))-v_p(\Phi_{n'}(d^{p^{k-1}})) = v_p(d^{n'p^k}-1)-v_p(d^{n'p^{k-1}}-1) = 1.$$

    Finally, let $p = 2$. If $n = 2$, then $\Phi_n(d) = d+1$ and so $v_2(\Phi_n(d)) = v_2(d+1)$. If $n>2$ is a power of 2, then $\Phi_n(d) = d^{n/2}+1$. Hence,
    $$v_2(\Phi_n(d)) = \begin{cases}
        1&\text{ if }d\text{ is odd},\\
        0&\text{ if }d\text{ is even}.\\
    \end{cases}$$
    For the remaining cases, if $d$ is even, then $\Phi_n(d)\equiv \Phi_n(0)\equiv 1\pmod 2$. If $d$ is odd, then $\Phi_n(d)\equiv \Phi_n(1)\equiv 1\pmod 2$. In all cases, $v_2(\Phi_n(d)) = 0$.
\end{proof}
\vspace{1em}
\begin{lemma}~
    \begin{enumerate}
        \item Let $p$ be a prime and let $d$ be a positive integer with $p\nmid d$. Then
        $$\ord_{p^a}(d) = \min\{r\in\nn\mid v_p(d^r-1)\geq a\}.$$
        \item Let $p$ be an odd prime number. If $\ord_p(d) = n$ and $v_p(d^n-1) = k$, then $\ord_{p^a}(d) = n\cdot p^{\max(0,a-k)}$.
        \item Let $p = 2$. If $d\equiv 1\pmod 4$ and $v_2(d-1) = k$, then $\ord_{2^a}(d) = 2^{\max(0,a-k)}$.
        If $d\equiv 3\pmod 4$ and $v_2(d+1) = k$, then
        $$\ord_{2^a}(d) = \begin{cases}
            1 &\text{ if }a = 1,\\
            2^{\max(1,a-k)}&\text{ if }a\geq 2.
        \end{cases}$$
    \end{enumerate}
\end{lemma}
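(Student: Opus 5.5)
Part (1) is a direct unpacking of the definition of multiplicative order. Since $p\nmid d$, the order $\ord_{p^a}(d)$ is the least positive $r$ with $p^a\mid d^r-1$, and $p^a\mid d^r-1$ is the same as $v_p(d^r-1)\ge a$. Nothing more is needed.

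For (2) I will use (1) together with Lemma 3.6 (lifting the exponent). First, every $r$ with $v_p(d^r-1)\ge a\ge 1$ satisfies $p\mid d^r-1$, so $n=\ord_p(d)$ divides $r$. Write $r=nt$.

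Apply Lemma 3.6(1) with $x=d^n$ and $y=1$; this is legitimate since $p\nmid d$ and $d^n\equiv 1 \pmod p$. It gives
$$v_p(d^{nt}-1)=v_p(d^n-1)+v_p(t)=k+v_p(t).$$
So the condition $v_p(d^{nt}-1)\ge a$ is equivalent to $v_p(t)\ge a-k$. The least positive $t$ satisfying this is $t=p^{\max(0,a-k)}$. By (1), $\ord_{p^a}(d)=n\,p^{\max(0,a-k)}$.

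For (3), take $p=2$ and split into the two residue classes of $d$ modulo $4$.

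If $d\equiv 1\pmod 4$, apply Lemma 3.6(2) with $x=d$ and $y=1$. This gives $v_2(d^r-1)=k+v_2(r)$, and the same minimization as in (2) yields $\ord_{2^a}(d)=2^{\max(0,a-k)}$.

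If $d\equiv 3\pmod 4$, the case $a=1$ is trivial: $d$ is odd, so $\ord_2(d)=1$. For $a\ge 2$ there are two steps.

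First, any odd $r$ fails. Indeed $d^r\equiv 3\pmod 4$, so $v_2(d^r-1)=1<a$. Hence $r$ is even; write $r=2s$.

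Second, since $d^2\equiv 1\pmod 8$, Lemma 3.6(2) applies to $x=d^2$ and $y=1$. Using $v_2(d-1)=1$, it gives
$$v_2(d^{2s}-1)=v_2(d^2-1)+v_2(s)=v_2(d-1)+v_2(d+1)+v_2(s)=1+k+v_2(s).$$
The condition $v_2(d^{2s}-1)\ge a$ becomes $v_2(s)\ge a-k-1$. The minimal choice is $s=2^{\max(0,a-k-1)}$, so $r=2^{\max(1,a-k)}$.

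The only delicate point is this $d\equiv 3\pmod 4$ case. Lemma 3.6(2) cannot be applied directly to $d$ and $1$, so one must first pass to $d^2$ and track the factor $v_2(d-1)=1$ correctly. The rest is bookkeeping with (1).
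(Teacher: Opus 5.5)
Your proposal is correct and follows essentially the same route as the paper: (1) by unpacking the definition of order, (2) by writing $r=nt$ and applying Lemma 3.6 to $x=d^n$, $y=1$, and (3) by applying Lemma 3.6 directly when $d\equiv 1\pmod 4$ and passing to $d^2$ when $d\equiv 3\pmod 4$, with the same minimization in each case. You justify the applicability of Lemma 3.6 slightly more explicitly than the paper does, but the argument is the same.
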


\begin{proof}~
    \begin{enumerate}
        \item Let $\ord_{p^a}(d) = n$. Then $d^n\equiv 1\pmod {p^a}$ and $d^i\not\equiv 1\pmod {p^a}$ for all $0<i<n$. On the other hand, denote $\min\{r\in\nn\mid v_p(d^r-1)\geq a\}$ by $m$. Then $v_p(d^m-1)\geq a$ and $v_p(d^i-1)<a$ for all $0< i<m$.

        Since $v_p(d^m-1)\geq a$ implies that $d^m\equiv 1\pmod{p^a}$, we have $n\leq m$. Conversely, $d^n\equiv 1\pmod{p^a}$ implies that $v_{p}(d^n-1)\geq a$, and hence $m\leq n$. Therefore, $n = m$.
        \item Let $n' = \ord_{p^a}(d)$. Since $d^{n'}\equiv 1\pmod p$, we have $n\mid n'$. Write $n' = nx$ for some $x\geq 1$. By Lemma 3.6, 
        $$v_p(d^{nx}-1) = v_p(d^n-1)+v_p(x) = k+v_p(x).$$
        By (1), $n'$ is the smallest positive integer $nx$ such that $k+v_p(x)\geq a$. Thus, the smallest possible value of $x$ is $x = p^{\max(0,a-k)}$, and hence
        $$\ord_{p^a}(d) = n' = n\cdot p^{\max(0,a-k)}.$$
        \item Suppose first that $d\equiv 1\pmod 4$. Then $k = v_2(d-1)\geq 2$. By Lemma 3.6,
        $$v_2(d^r-1) = v_2(d-1)+v_2(r)$$
        for every $r\geq 1$. Hence, by (1), the order of $d$ modulo $2^a$ is determined by the smallest $r$ satisfying $k+v_2(r)\geq a$. Thus,
        $$\ord_{2^a}(d) = 2^{\max(0,a-k)}.$$

        Now suppose that $d\equiv 3\pmod 4$. Then $\ord_2(d) = 1$ and $\ord_4(d) = 2$. Moreover, if $r$ is odd, then $v_2(d^r-1) = v_2(d-1) = 1$, so for $a\geq 2$ the order must be even. Write $r = 2s$. Lemma 3.6 gives
        $$v_2(d^{2s}-1) = v_2(d^2-1)+v_2(s) = 1+k+v_2(s).$$
        By (1), for $a\geq 2$ the smallest such $s$ satisfies $1+k+v_2(s)\geq a$. Hence $s = 2^{\max(0,a-k-1)}$, and therefore $\ord_{2^a}(d) = 2s = 2^{\max(1,a-k)}$.
    \end{enumerate}
\end{proof}

\begin{lemma}
    Let $p$ be a prime number, and let $d,n\geq 2$ be integers satisfying $\gcd(p,d) = 1$. Recall that
    $$S(p,d,n)\coloneqq\{a\geq 1:\ord_{p^a}(d) = n\}.$$
    Then 
    $$v_p(\Phi_n(d)) = |S(p,d,n)|.$$
\end{lemma}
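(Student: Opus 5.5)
Prove the identity $v_p(\Phi_n(d)) = |S(p,d,n)|$ by splitting into the cases of Lemma 3.7 and, in each case, computing $S(p,d,n)$ explicitly from Lemma 3.8. Throughout, write $e = \ord_p(d)$. Since $\ord_{p^a}(d)$ is a multiple of $e$ for every $a\geq 1$, the set $S(p,d,n)$ is empty unless $e \mid n$.

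\textbf{Odd $p$.} Set $k = v_p(d^e-1)\geq 1$. By Lemma 3.8(2), $\ord_{p^a}(d) = e\,p^{\max(0,a-k)}$. The sequence of orders therefore equals $e$ for $a=1,\dots,k$, and equals $e p^{j}$ for the single value $a=k+j$ when $j\geq 1$. Consequently:
\begin{itemize}
\item if $n=e$, then $|S(p,d,n)|=k$;
\item if $n=e p^j$ with $j\geq 1$, then $|S(p,d,n)|=1$;
\item otherwise $S(p,d,n)=\emptyset$.
\end{itemize}

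Now compare with Lemma 3.7.
\begin{itemize}
\item If $p\mid \Phi_n(d)$ and $p\nmid n$, Lemma 3.7 gives $\ord_p(d)=n$, so $n=e$. It also gives $v_p(\Phi_n(d)) = v_p(d^n-1) = k$, which matches.
\item If $p\mid\Phi_n(d)$ and $p\mid n$, write $n=n'p^j$. Lemma 3.7 gives $e=n'$ and $v_p=1$, again a match.
\item If $p\nmid\Phi_n(d)$, we must show $S=\emptyset$, i.e. that $n$ is not of the form $e$ or $ep^j$. Argue by contradiction. If $n=e$, then $p\mid d^n-1=\prod_{t\mid n}\Phi_t(d)$, so $p\mid\Phi_t(d)$ for some $t\mid n$. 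Since $p\nmid n$ (because $e\mid p-1$), Lemma 3.7 forces $\ord_p(d)=t$, hence $t=n$, a contradiction.
\item If $n=ep^j$ with $j\geq 1$, use $\Phi_n(d)=\Phi_e(d^{p^j})/\Phi_e(d^{p^{j-1}})$ together with $p\mid\Phi_e(d)$ (from the previous point) and LTE (Lemma 3.6). As in the proof of Lemma 3.7, this gives $v_p(\Phi_n(d))=1$, a contradiction.
\end{itemize}

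A cleaner alternative, which I would try first, avoids this case-by-case matching. It is the Möbius-style identity
\[
v_p(d^m-1)=\#\{a\geq1 : \ord_{p^a}(d)\mid m\},
\]
which holds because $\ord_{p^a}(d)\mid m \iff p^a\mid d^m-1$. Since $v_p(d^m-1)=\sum_{t\mid m}v_p(\Phi_t(d))$ and $\#\{a : \ord_{p^a}(d)\mid m\} = \sum_{t\mid m}|S(p,d,t)|$, Möbius inversion over $m$ gives $v_p(\Phi_n(d))=|S(p,d,n)|$ for all $n\geq1$. This works for every prime $p\nmid d$, including $p=2$, and needs nothing beyond Lemma 3.8(1). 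It makes the whole statement essentially immediate, so I would write the proof this way.

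The only care points are checking finiteness of each set $\{a : \ord_{p^a}(d)\mid m\}$, which holds because $d^m-1\neq 0$ for $d\geq2$, and confirming that the equivalence $\ord_{p^a}(d)\mid m \iff p^a\mid d^m-1$ is exactly the content of Lemma 3.8(1). I expect no real obstacle; the one subtle point is that the inversion requires the identity for all divisors $t$ of $n$, including $t=1$. That case is fine, since $\Phi_1(d)=d-1$ and $S(p,d,1)=\{a : p^a\mid d-1\}$.
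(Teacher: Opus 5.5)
Your proof is correct, and the argument you say you would actually write up takes a genuinely different route from the paper's.

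\textbf{The paper's route.} The paper proves the lemma by explicit case analysis. It uses Lemma 3.7, which itself rests on the LTE lemma and a separability argument for $x^n-1$ over $\ff_p$, to know $v_p(\Phi_n(d))$ in each case. It uses Lemma 3.8 to list the $a$ with $\ord_{p^a}(d)=n$. The cases are: odd $p$ with $p\nmid n$; odd $p$ with $p\mid n$; and several $p=2$ subcases according to $n$ and $d \bmod 4$. In each case it matches the two counts.

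\textbf{Your route.} You observe that for every $m\geq 1$ both $\sum_{t\mid m}v_p(\Phi_t(d))$ and $\sum_{t\mid m}|S(p,d,t)|$ equal $v_p(d^m-1)$. The first follows from $d^m-1=\prod_{t\mid m}\Phi_t(d)$. The second holds because $\{a\geq 1:\ord_{p^a}(d)\mid m\}=\{1,\dots,v_p(d^m-1)\}$, and this set is the disjoint union of the $S(p,d,t)$ for $t\mid m$. Möbius inversion, or strong induction on $n$, then gives the identity for all $n\geq 1$ at once, with $t=1$ handled exactly as you note.

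\textbf{Comparison.} Your argument is shorter and uniform in $p$, with no separate treatment of $p=2$. It needs neither LTE nor Lemmas 3.7 and 3.8. Since Corollary 3.10 and Theorems 4.3 and 4.4 use only the statement of this lemma, that whole chain of lemmas becomes dispensable. The paper's approach does yield an explicit description of $S(p,d,n)$: for odd $p$, either $\{1,\dots,k\}$ or a single element $k+j$. Its write-up, however, contains a slip that your route avoids. It asserts that $a\leq v_p(d^n-1)$ forces $\ord_{p^a}(d)=\ord_p(d)$. This fails for $p=3$, $d=2$, $n=6$: there $v_3(63)=2$, but $\ord_9(2)=6\neq 2=\ord_3(2)$.

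\textbf{Minor remarks.}
\begin{enumerate}
\item The equivalence $\ord_{p^a}(d)\mid m\iff p^a\mid d^m-1$ is the basic property of multiplicative order. It is not literally Lemma 3.8(1), which is the minimality characterization.
\item Your first, case-by-case sketch treats only odd $p$ and would need the $p=2$ analysis to be complete. Since you choose the inversion argument, this does not affect the proof.
\end{enumerate}
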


\begin{proof}
    By Lemma 3.7, it suffices to show the following:
    \begin{enumerate}
        \item If $p\nmid \Phi_n(d)$, then there does not exist $a\geq 1$ such that $p^a\mid d^n-1$ and $\ord_{p^a}(d) = n$.
        \item If $p$ is odd, $p\mid \Phi_n(d)$, and $p\nmid n$, then $|S(p,d,n)| = v_p(d^n-1)$.
        \item If $p$ is odd, $p\mid \Phi_n(d)$, and $p\mid n$, then $|S(p,d,n)| = 1$.
        \item If $2\mid\Phi_n(d)$, then 
        $$|S(p,d,n)| = \begin{cases}
            v_2(d+1) &\text{ if }n = 2,\\
            1&\text{ if } n>2\text{ is a power of 2 and }d\text{ is odd},\\
            0&\text{ otherwise}.
        \end{cases}$$
    \end{enumerate}
    Assume first that $p$ is odd. Let $v_p(d^n-1) = k$. Then $a\in S(p,d,n)$ implies that $1\leq a\leq k$ and so $\ord_{p^a}(d) = \ord_p(d)$ by Lemma 3.8.
    \begin{enumerate}
        \item Suppose $p\nmid \Phi_n(d)$, and there exists $a\geq 1$ such that $p^a\mid d^n-1$ and $\ord_{p^a}(d) = n$. Then for all $e<n$, one has $d^e\not\equiv 1\pmod {p^a}$. Since $d^n-1 = \Phi_n(d)\cdot\prod_{\substack{e\mid n\\e<n}}\Phi_e(d)$, it follows that there exists $e_0<n$ and $e_0\mid n$ such that $p\mid \Phi_{e_0}(d)$, and hence $p\mid d^{e_0}-1$. So $\ord_{p^a}(d) = \ord_p(d)$ divides $e_0$ which is a contradiction. Hence, $S(p,d,n) = \emptyset$.
        \item If $p\mid \Phi_n(d)$ and $p\nmid n$, then $\ord_{p^a}(d) = \ord_p(d) = n$ for all $1\leq a\leq k$. Therefore, $|S(p,d,n)| = k = v_p(d^n-1)$.
        \item Suppose $p\mid \Phi_n(d)$ and $p\mid n$. If we write $n = n'\cdot p^r$ where $r = v_p(n)$, then $\ord_p(d) = n'$. Let $v_p(d^{n'}-1) = k'$, then $\ord_{p^a}(d) = n'\cdot p^{\max(0,a-k')}$. Hence
        $$\ord_{p^a}(d) = n\Leftrightarrow a-k' = r \Leftrightarrow a = k'+r,$$
        which implies $|S(p,d,n)| = 1 = v_p(\Phi_n(d))$.
        \item For $n = 2$, we have $v_2(\Phi_n(d)) = v_2(d+1)$. If $d\equiv 1\pmod 4$, we denote $v_2(d-1)$ by $k$. Then $k\geq 2$ and $\ord_{2^a}(d) = 2^{\max(0,a-k)}$. Suppose $a\in S(p,d,n)$, then $2^{\max(0,a-k)} = \ord_{2^a}(d) = 2$ if and only if $a = k+1$. So $|S(p,d,n)| = 1 = v_2(d+1) = v_2(\Phi_n(d))$. Suppose $d\equiv 3\pmod 4$, and $v_2(d+1) = k'\geq 2$. If $a = 1$, then $\ord_2(d) = 1$ and so $1\not\in S(p,d,n)$. If $a\geq 2$, then $2^{\max(1,a-k')} = \ord_{2^a}(d) = 2$ if and only if $a\leq 1+k'$. So $|S(p,d,n)| = k' = v_2(d+1) = v_2(\Phi_n(d))$.

        For $n = 2^c$ for some $c> 1$. If $d\equiv 1\pmod 4$, then $\ord_{2^a}(d) = 2^{\max(0,a-k)}$. Suppose $a\in S(p,d,n)$, then $2^{\max(0,a-k)} = \ord_{2^a}(d) = 2^c$ if and only if $a = k+c$. So $|S(p,d,n)| = 1 = v_2(\Phi_n(d))$. Suppose $d\equiv 3\pmod 4$, and $v_2(d+1) = k'\geq 2$. If $a = 1$, then $\ord_2(d) = 1$ and so $1\not\in S(p,d,n)$. If $a\geq 2$, then $2^{\max(1,a-k')} = \ord_{2^a}(d) = 2^c$ if and only if $a\leq c+k'$. So $|S(p,d,n)| = 1 = v_2(\Phi_n(d))$.

        If $n$ is not a power of $2$, or $d$ is even, then $S(p,d,n) = \emptyset$ because $\ord_{2^a}(d)$ is a power of $2$. Thus, $|S(p,d,n)| = 0 = v_2(\Phi_n(d))$.
    \end{enumerate}
\end{proof}

\begin{corollary}
    For integers $d,n\geq 2$, we have $\Phi_{d,n}(0) = 1$ and $\Phi_{d,n}(1) = \Phi_n(d)$.
\end{corollary}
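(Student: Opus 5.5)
By Lemma 3.1, $\Phi_{d,n}=\prod_{\ord_k(d)=n}\Phi_k$. Every $k$ in this product satisfies $k\mid d^n-1$ and $\ord_k(d)=n\geq 2$, so $k\geq 3$; in particular $k\neq 1$. Both identities will be read off this factorization.

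For $\Phi_{d,n}(0)$, I use $\Phi_k(0)=1$ for all $k\geq 2$, which the paper recalls just before the corollary. Each factor then evaluates to $1$ at $x=0$, so $\Phi_{d,n}(0)=1$.

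For $\Phi_{d,n}(1)$, I use the recalled fact that $\Phi_k(1)=p$ when $k=p^a$ is a prime power with $a\geq 1$, and $\Phi_k(1)=1$ otherwise. This gives
$$\Phi_{d,n}(1)=\prod_{p}\ \prod_{\substack{a\geq 1\\ \ord_{p^a}(d)=n}} p=\prod_p p^{|S(p,d,n)|}.$$
Here the condition $\ord_{p^a}(d)=n$ already forces $p\nmid d$, so only primes $p\nmid d$ contribute.

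It remains to compare this with $\Phi_n(d)$ one prime at a time. For $p\nmid d$, Lemma 3.9 gives $v_p(\Phi_n(d))=|S(p,d,n)|$. For $p\mid d$, the order $\ord_{p^a}(d)$ is undefined, so $S(p,d,n)=\emptyset$. On the other side, $\Phi_n(d)\equiv \Phi_n(0)=1\pmod p$ because $n\geq 2$, so $v_p(\Phi_n(d))=0$. Hence the two positive integers $\Phi_{d,n}(1)$ and $\Phi_n(d)$ have the same $p$-adic valuation at every prime. Since $\Phi_n(d)>0$ for $d\geq 2$, they are equal.

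The main work is Lemma 3.9, which is already established. The only remaining care is the bookkeeping: every $k$ in the product is at least $2$, and primes dividing $d$ contribute to neither side.
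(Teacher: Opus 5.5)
Your proof is correct and follows the paper's argument. You factor $\Phi_{d,n}$ via Lemma 3.1, evaluate at $0$ using $\Phi_k(0)=1$, and at $1$ compare $p$-adic valuations prime by prime using the values of $\Phi_k(1)$ and Lemma 3.9. Your separate treatment of primes $p\mid d$ (where $S(p,d,n)=\emptyset$ and $\Phi_n(d)\equiv 1\pmod p$) and your remark that $k\neq 1$ tidy up points the paper glosses over, since it applies Lemma 3.9 ``for all prime numbers $p$'' even though that lemma assumes $\gcd(p,d)=1$.
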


\begin{proof}
    By Lemma 3.1, we have $\Phi_{d,n}(x) = \prod_{\ord_k(d) = n}\Phi_k(x)$. Note that $\Phi_n(0) = 1$ for all $n\geq 2$. If we substitute $x = 0$, then $\Phi_{d,n}(0) = \prod_{\ord_k(d) = n}\Phi_k(0) = 1$. 

    By Lemma 3.9, we have $v_p(\Phi_n(d)) = |S(p,d,n)|$ for any prime $p$. Moreover, $$\Phi_n(1) = \begin{cases}
        p &\text{ if }n = p^a\text{ for some }a>0,\\
        1 &\text{ otherwise.}
    \end{cases}$$
    Hence,
    $$v_p(\Phi_{d,n}(1)) = \sum_{\ord_{p^a}(d) = n}v_p(\Phi_{p^a}(1)) = |S(p,d,n)| = v_p(\Phi_n(d))$$ 
    for all prime numbers $p$. Therefore, $\Phi_{d,n}(1) = \Phi_n(d)$. 
\end{proof}

\section{The resultants of dynatomic polynomials for $x^d$}

Let $1\leq n<m$ be fixed. By Proposition 3.3(3), the degree of $\Phi_{d,m}$ is even. Thus,
$$\res(\Phi_{d,m},\Phi_{d,n}) = (-1)^{\deg \Phi_{d,m}\deg \Phi_{d,n}}\res(\Phi_{d,n},\Phi_{d,m}) = \res(\Phi_{d,n},\Phi_{d,m}).$$
Therefore, it is sufficient to compute $\res(\Phi_{d,n},\Phi_{d,m})$ for $1\leq n<m$. 

Since $\Phi_{d,n} = \prod_{\ord_a(d) = n}\Phi_a$, Proposition 2.1(3) yields
$$\res(\Phi_{d,n},\Phi_{d,m}) = \prod_{\ord_a(d) = n}\prod_{\ord_b(d) = m}\res(\Phi_a,\Phi_b).$$
Since $\ord_a(d)=n$ and $\ord_b(d) = m$ with $n\neq m$, we have $a\neq b$. Hence, by Theorem 2.2, each factor $\res(\Phi_a,\Phi_b)$ is positive. Therefore, $\res(\Phi_{d,n},\Phi_{d,m})$ is a positive integer. 

To determine when this resultant exceeds 1, recall that $\res(\Phi_a,\Phi_b)>1$ only when either $a/b$ or $b/a$ is a power of a prime $p$. Thus, we first investigate when a divisibility relation between $a$ and $b$ occur.

\begin{lemma}
    Let $d,m,n$ be positive integers with $d>1$ and $n\nmid m$. If $a$ and $b$ are positive integers satisfying
    $$\ord_a(d) = n,\quad \ord_b(d) = m,$$
    then $a\nmid b$.
\end{lemma}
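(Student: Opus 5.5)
We argue by contraposition: assuming $a\mid b$, we show that $n\mid m$. The whole argument rests on one elementary fact. If $a\mid b$, then every integer congruent to $1$ modulo $b$ is also congruent to $1$ modulo $a$.

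The key steps are as follows.

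First, we note that the hypotheses $\ord_a(d)=n$ and $\ord_b(d)=m$ presuppose $\gcd(d,a)=\gcd(d,b)=1$, so both orders are well defined.

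Second, by the definition of $\ord_b(d)$ we have $d^m\equiv 1\pmod b$.

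Third, suppose $a\mid b$. Then $b\mid d^m-1$ implies $a\mid d^m-1$, that is, $d^m\equiv 1\pmod a$.

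Fourth, we use the standard fact about multiplicative orders: $d^r\equiv 1\pmod a$ if and only if $\ord_a(d)\mid r$. This is the same fact already invoked in the proof of Lemma 3.1, where $k\mid d^m-1$ was shown to be equivalent to $\ord_k(d)\mid m$. To justify it, write $r=qn+s$ with $0\le s<n$. Then $d^s\equiv 1\pmod a$, and minimality of $n$ forces $s=0$. Applying this with $r=m$ gives $n=\ord_a(d)\mid m$.

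This contradicts the hypothesis $n\nmid m$, so $a\nmid b$.

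One edge case needs a check: $a=1$. Then $\ord_1(d)=1$, so $n=1$, which divides every $m$. Hence $a=1$ cannot occur under the hypothesis $n\nmid m$, and the argument above is consistent with it.

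There is no genuine obstacle here. The only point requiring care is that divisibility passes downward, from modulus $b$ to modulus $a$, and not upward. This is exactly why the conclusion is $a\nmid b$ rather than a statement about $b\nmid a$.
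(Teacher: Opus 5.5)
Your proof is correct and is essentially the paper's argument. The paper also passes from $b\mid d^m-1$ to $a\mid d^m-1$, then writes $m=nq+r$ with $0<r<n$ to get $d^r\equiv 1\pmod a$, which contradicts the minimality of $n=\ord_a(d)$; you package that last step as the standard fact $d^r\equiv 1\pmod a \iff \ord_a(d)\mid r$.
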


\begin{proof}
    Write $m = nq+r$, where $q,r\in\zz$ and $0<r<n$. If $a\mid b$, then $a\mid d^m-1$. Hence,
    $$1\equiv d^m\equiv d^{nq+r}\equiv d^r\pmod a.$$
    Thus, $\ord_a(d)\leq r<n$, which contradicts $\ord_a(d) = n$.
\end{proof}

The following corollary is an immediate consequence of Lemma 4.1.

\begin{corollary}
    Let $n,m$ be positive integers with $n<m$. If $a$ and $b$ are positive integers satisfying
    $$\ord_a(d) = n,\quad \ord_b(d) = m,$$
    then $b\nmid a$. 
\end{corollary}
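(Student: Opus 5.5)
Suppose $b\mid a$. Since $\ord_b(d) = m$, we have $b\mid d^m-1$. Since $\ord_a(d) = n$, we have $a\mid d^n-1$, and therefore $b\mid a\mid d^n-1$. So $d^n\equiv 1\pmod b$, which forces $\ord_b(d)\mid n$; that is, $m\mid n$. But $1\leq n<m$, so $m$ cannot divide the positive integer $n$. This contradiction shows $b\nmid a$.

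There is essentially no obstacle here. The only points to check are that $n\geq 1$, so that "$m\mid n$" really contradicts $n<m$, and that the orders are well defined, which needs $\gcd(a,d) = \gcd(b,d) = 1$; this is implicit in the hypothesis that the orders exist.

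One could instead try to derive the statement literally from Lemma 4.1, with the roles of $(a,n)$ and $(b,m)$ swapped, using $m\nmid n$. However, Lemma 4.1 as stated concludes $a\nmid b$ from $n\nmid m$. Applied with the two pairs interchanged, it requires $m\nmid n$, which holds automatically since $n<m$, and then gives $b\nmid a$ directly. I would present the statement this way, as an immediate application of Lemma 4.1, with the direct argument above serving as the verification.
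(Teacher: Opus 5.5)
Your proof is correct and takes the same approach as the paper, which presents the corollary as an immediate consequence of Lemma 4.1 with the roles of $(a,n)$ and $(b,m)$ interchanged. The swapped hypothesis $m\nmid n$ holds automatically because $0<n<m$. Your direct divisibility argument is just the proof of Lemma 4.1 specialized to this case.
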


\begin{theorem}
    If $m>1$, then $\res(\Phi_{d,1},\Phi_{d,m}) = \Phi_m(d)^{d-1}$
\end{theorem}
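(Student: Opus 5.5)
Note first that $\Phi_{d,1}(x)=x^d-x=x(x^{d-1}-1)$ is the case $n=1$ of the definition, since $\mu(1)=1$. Lemma 3.1 was stated only for $n\ge 2$, so I would work with this factorization directly instead of the cyclotomic decomposition:
$$\Phi_{d,1}(x)=x\prod_{k\mid d-1}\Phi_k(x),$$
and the $\Phi_k$ occurring are exactly those with $\ord_k(d)=1$. By Proposition 2.1(3),
$$\res(\Phi_{d,1},\Phi_{d,m})=\res(x,\Phi_{d,m})\cdot\res(x^{d-1}-1,\Phi_{d,m}).$$

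The first factor is $\res(x,\Phi_{d,m})=\Phi_{d,m}(0)$ by Proposition 2.1(1), since $x$ is monic of degree one with root $0$. Corollary 3.10 gives $\Phi_{d,m}(0)=1$, so this factor is $1$.

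For the second factor, let $\zeta$ run over the $(d-1)$-th roots of unity. Proposition 2.1(1) gives
$$\res(x^{d-1}-1,\Phi_{d,m})=\prod_{\zeta^{d-1}=1}\Phi_{d,m}(\zeta).$$
The key step is to show that $\Phi_{d,m}(\zeta)=\Phi_{d,m}(1)$ for every such $\zeta$. Since $\zeta^{d}=\zeta$, iterating gives $\zeta^{d^k}=\zeta$ for all $k$. Hence each factor $x^{d^k}-x$ vanishes at $\zeta$, so the Möbius product cannot be evaluated termwise. I would instead use the cyclotomic decomposition $\Phi_{d,m}=\prod_{\ord_b(d)=m}\Phi_b$ from Lemma 3.1, which is valid since $m\ge 2$. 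Let $\zeta$ be a primitive $a$-th root of unity, with $a\mid d-1$. Then
$$\Phi_b(\zeta)=\prod_{\eta\ \text{primitive}\ b\text{-th}}(\zeta-\eta).$$

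The cleanest route is to compare the multiset of such values with $\Phi_b(1)$, using that $x\mapsto x^d$ permutes the primitive $b$-th roots of unity and fixes $\zeta$. This does not immediately give equality, however, so I would change strategy and compute the total product
$$\prod_{\zeta^{d-1}=1}\Phi_{d,m}(\zeta)=\pm\prod_{\Phi_{d,m}(\beta)=0}(\beta^{d-1}-1).$$
Set $N=\deg\Phi_{d,m}$, which is even by Proposition 3.3(3), so the sign is $(-1)^{(d-1)N}=1$.

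Now use the fact that the roots $\beta$ of $\Phi_{d,m}$ are permuted by $\beta\mapsto\beta^d$. Then
$$\prod_\beta(\beta^{d-1}-1)=\prod_\beta\frac{\beta^d-\beta}{\beta}=\prod_\beta\frac{\beta^d-1}{\beta}\cdot\prod_\beta\frac{\beta^d-\beta}{\beta^d-1}.$$
This is getting messy. A cleaner idea is to write $\beta^{d-1}-1=\prod_{\zeta}(\beta-\zeta)$ and recognise
$$\prod_\beta(\beta^{d-1}-1)=\res(\Phi_{d,m},x^{d-1}-1)=\prod_{k\mid d-1}\res(\Phi_{d,m},\Phi_k).$$
Each factor is then computed by Theorem 2.2 as $\prod_{\ord_b(d)=m}\res(\Phi_b,\Phi_k)$. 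Note $k\mid d-1$ forces $k\mid b$, because $b\mid d^m-1$ and $\ord_b(d)=m$ together with Lemma 4.1 in the divisibility direction. So $\res(\Phi_b,\Phi_k)=p^{\varphi(k)}$ exactly when $b/k$ is a power of a prime $p$.

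Summing over $k\mid d-1$, I would aim to show that for each prime $p$,
$$\sum_{k\mid d-1}\varphi(k)\cdot\#\{b:\ b=kp^j,\ j\ge1,\ \ord_b(d)=m\}=(d-1)\,v_p(\Phi_m(d)).$$
This is the main obstacle. My approach is to fix $k$ and use Lemma 3.8, which describes how $\ord_{kp^j}(d)$ is governed by $\ord_{p^{j+v_p(k)}}(d)$ when $k\mid d-1$. This should give that the count equals $|S(p,d,m)|$ independently of $k$, after taking care to handle the $p$-part of $k$. Then $\sum_{k\mid d-1}\varphi(k)=d-1$ and Lemma 3.9 finish the proof, giving $\Phi_m(d)^{d-1}$. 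The case $p=2$ with $d\equiv 3\pmod 4$ will need separate checking via Lemma 3.8(3).
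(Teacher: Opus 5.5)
Your outline follows the paper's route. You discard the factor $x$ via $\Phi_{d,m}(0)=1$, reduce to $\prod_{k\mid d-1}\prod_{\ord_b(d)=m}\res(\Phi_b,\Phi_k)$, keep only $b=kp^j$ with $j\ge 1$ via Theorem 2.2, and finish with Lemma 3.9 and $\sum_{k\mid d-1}\varphi(k)=d-1$. The detour through $\prod_\zeta\Phi_{d,m}(\zeta)$ is unnecessary, though your sign bookkeeping there is correct.

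However, one step is false as written: $k\mid d-1$ does \emph{not} force $k\mid b$. For example, $d=4$, $m=2$, $k=3$, $b=5$ gives $\ord_5(4)=2$ while $3\nmid 5$. Lemma 4.1 gives nothing here, because $1\mid m$. The correct and sufficient fact is $b\nmid k$: if $b\mid k\mid d-1$ then $\ord_b(d)=1\neq m$ (Corollary 4.2). That excludes $k/b$ being a prime power, so $\res(\Phi_b,\Phi_k)\neq 1$ only when $b/k=p^j$ with $j\ge 1$. This is your conclusion, but for a different reason.

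The more serious gap is that the heart of the proof is only announced (``should give''), and the tool you cite cannot deliver it. That step is: for fixed $k\mid d-1$ and prime $p$, the number of $j\ge 1$ with $\ord_{kp^j}(d)=m$ equals $|S(p,d,m)|$. Lemma 3.8 concerns only prime-power moduli and says nothing about $\ord_{kp^j}(d)$.

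The missing ingredient is the Chinese remainder theorem. Write $k=k'p^e$ with $e=v_p(k)$ and $p\nmid k'$. Then $\ord_{kp^j}(d)=\lcm(\ord_{k'}(d),\ord_{p^{e+j}}(d))=\ord_{p^{e+j}}(d)$, since $k'\mid d-1$. Also, $p^e\mid d-1$ gives $\ord_{p^t}(d)=1$ for every $t\le e$. Because $m>1$, every element of $S(p,d,m)$ therefore exceeds $e$, and $j\mapsto e+j$ is a bijection from $\{j\ge 1:\ord_{kp^j}(d)=m\}$ onto $S(p,d,m)$.

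This argument is uniform in $p$, so no separate treatment of $p=2$, $d\equiv 3\pmod 4$ is needed. For $p\mid d$, both sides of your target identity vanish: no $kp^j$ is prime to $d$, and $\Phi_m(d)\equiv\Phi_m(0)=1\pmod p$. With this step supplied, your argument coincides with the paper's proof.
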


\begin{proof}
    Note that $\Phi_{d,1}(x)= x\cdot(x^{d-1}-1) = x\cdot\prod_{\ord_a(d) = 1}\Phi_a(x)$, and $\Phi_{d,m}(x) = \prod_{\ord_b(d) = m}\Phi_b(x)$. By Proposition 2.1(1), we have $\res(x,\Phi_{d,m}) = \Phi_{d,m}(0)$. Hence,
    \begin{align*}
        \res(\Phi_{d,1},\Phi_{d,m}) &= \res(x,\Phi_{d,m})\cdot\prod_{\ord_a(d) = 1}\res(\Phi_a,\Phi_{d,m})\\
        &= \Phi_{d,m}(0)\cdot\prod_{\ord_a(d) = 1}\res(\Phi_a,\Phi_{d,m})\\
        &= \prod_{\ord_a(d) = 1}\prod_{\ord_b(d) = m}\res(\Phi_a,\Phi_b).
    \end{align*}
    Since $m>1$, we have $b\nmid a$ by Corollary 4.2. Thus, $\res(\Phi_a,\Phi_b)>1$ can occur only when $b/a$ is a power of a prime $p$. Fix a divisor $a$ of $d-1$ and a prime number $p$. Write $a = a'p^k$, where $k = v_p(a)$ and $\gcd(a',p) = 1$. We determine the number of positive integers $s$ such that $\ord_{ap^s}(d) = m$. Since $\ord_{a'}(d) = 1$, we have
    $$\ord_{ap^s}(d) = \ord_{a'\cdot p^{k+s}}(d) = \lcm(\ord_{a'}(d),\ord_{p^{k+s}}(d)) = \ord_{p^{k+s}}(d).$$
    By Theorem 2.2, $\res(\Phi_a,\Phi_b) = p^{\varphi(a)}$ if and only if $b = p^s\cdot a$ for some $s\geq 1$. Moreover, such an integer $s$ must satisfy $\ord_b(d) = \ord_{p^{k+s}}(d) = m$. Since $\ord_{p^k}(d) = 1$ and $1<m$, it follows that 
    $$\{s\geq 1:\ord_{p^{k+s}}(d) = m\} = \{t\geq 1:\ord_{p^t}(d) = m\} =  S(p,d,m).$$
    Hence, for a fixed $p$, one has
    \begin{align*}
        v_p(\res(\Phi_{d,1},\Phi_{d,m})) &= \sum_{\ord_a(d) = 1}\varphi(a)\cdot|\{s\geq 1:\ord_{p^{k+s}}(d) = m\}|\\
        &= \sum_{\ord_a(d) = 1}\varphi(a)\cdot|S(p,d,m)|\\
        &= \sum_{\ord_a(d) = 1}\varphi(a)\cdot v_p(\Phi_m(d))&&\text{ by Lemma 3.9}\\
        &= (d-1)v_p(\Phi_m(d)).
    \end{align*}
    Therefore,
    $$\res(\Phi_{d,1},\Phi_{d,m}) = \prod_{p\text{ primes}}p^{(d-1)v_p(\Phi_m(d))} = \Phi_m(d)^{d-1}.$$
\end{proof}

Finally, we investigate the resultant $\res(\Phi_{d,n},\Phi_{d,m})$ for $1<n<m$. 

\begin{theorem}
    Let $1<n<m$. The following statements hold. 
    \begin{enumerate}
        \item $\res(\Phi_{d,n},\Phi_{d,m}) = 1$ if and only if $n\nmid m$.
        \item Let $p$ be a prime divisor of $\Phi_{m}(d)$. If $n\mid m$, then
        $$v_p(\res(\Phi_{d,n},\Phi_{d,m}))\geq v_p(\Phi_m(d))\left(\sum_{k\mid n}\mu\left(\frac{n}{k}\right)d^k\right).$$
    \end{enumerate}
\end{theorem}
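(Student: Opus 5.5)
We prove part (2) first and then deduce part (1) from it, together with Lemma 4.1 and Corollary 4.2. Throughout we use the factorization already noted at the start of this section:
$$\res(\Phi_{d,n},\Phi_{d,m}) = \prod_{\ord_a(d) = n}\prod_{\ord_b(d) = m}\res(\Phi_a,\Phi_b).$$
By Theorem 2.2, every factor is a positive integer, since $a\neq b$.

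\emph{Part (2).} Assume $n\mid m$ and fix a prime $p\mid \Phi_m(d)$.
\begin{enumerate}
\item Since $p\mid d^m-1$, we have $p\nmid d$. Lemma 3.9 then gives $|S(p,d,m)| = v_p(\Phi_m(d))\geq 1$.
\item Fix $a$ with $\ord_a(d)=n$ and write $a=a'p^k$ with $p\nmid a'$. For each $t\in S(p,d,m)$ we show $t>k$. The order $\ord_{p^j}(d)$ is nondecreasing in $j$. Moreover $\ord_{p^k}(d)$ divides $\ord_a(d)=n<m=\ord_{p^t}(d)$. If $t\leq k$ we would get $m\leq \ord_{p^k}(d)\leq n$, which is impossible.
\item Put $s=t-k\geq 1$ and $b=ap^s=a'p^t$. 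As in the proof of Theorem 4.3,
$$\ord_b(d)=\lcm(\ord_{a'}(d),\ord_{p^t}(d)).$$
Here $\ord_{a'}(d)\mid n\mid m$ and $\ord_{p^t}(d)=m$, so $\ord_b(d)=m$.
\item Since $b/a=p^s$ is a power of $p$, Theorem 2.2 gives $\res(\Phi_a,\Phi_b)=p^{\varphi(a)}$.
\item Distinct $t$ give distinct $b$. Hence $a$ contributes at least $\varphi(a)\,|S(p,d,m)| = \varphi(a)\,v_p(\Phi_m(d))$ to the $p$-adic valuation.
\item All other factors are positive integers, so they can only increase the valuation. Summing over all $a$ with $\ord_a(d)=n$ and using Proposition 3.3(1),
$$\sum_{\ord_a(d)=n}\varphi(a)=\deg\Phi_{d,n}=\sum_{k\mid n}\mu(n/k)d^k.$$
This gives the stated bound.
\end{enumerate}

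\emph{Part (1).} Suppose first that $n\nmid m$. Take any pair $a,b$ with $\ord_a(d)=n$ and $\ord_b(d)=m$. Lemma 4.1 gives $a\nmid b$ and Corollary 4.2 gives $b\nmid a$. So neither $a/b$ nor $b/a$ is a prime power, and every $\res(\Phi_a,\Phi_b)$ equals $1$ by Theorem 2.2. Hence the resultant is $1$.

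Conversely, suppose $n\mid m$.
\begin{enumerate}
\item $\Phi_m(d)>1$. Indeed $m\geq 2$, and every root $\zeta$ of $\Phi_m$ is a root of unity with $\zeta\neq 1$, so $|d-\zeta|>d-1\geq 1$. Taking the product over all roots gives $|\Phi_m(d)|>1$.
\item Choose a prime $p\mid\Phi_m(d)$.
\item $\deg\Phi_{d,n}>0$. This sum counts $\varphi(a)$ over all $a$ with $\ord_a(d)=n$, and $a=d^n-1$ is one such integer.
\item Part (2) then gives $v_p(\res(\Phi_{d,n},\Phi_{d,m}))>0$, so the resultant is not $1$.
\end{enumerate}

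The main point requiring care is the inequality $t>k$ in step 2 of part (2), which ensures that $s=t-k$ is a positive exponent. It is the analogue of the observation $\ord_{p^k}(d)=1<m$ used in Theorem 4.3, and it rests on the strict inequality $n<m$ together with the monotonicity of $\ord_{p^j}(d)$ in $j$.
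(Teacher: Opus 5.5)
Your proof is correct, and its skeleton is the paper's. Both factor the resultant into the $\res(\Phi_a,\Phi_b)$. Both count, for each $a$ with $\ord_a(d)=n$, the $b=ap^s$ with $\ord_b(d)=m$, and compare that count with $|S(p,d,m)|=v_p(\Phi_m(d))$ via Lemma 3.9. Both then sum using Proposition 3.3(1) and derive the converse of (1) from (2).

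The difference is how you prove the key inequality $|T(a,p,d,m)|\ge |S(p,d,m)|$, where $T(a,p,d,m)=\{t\ge 1:\ord_{ap^t}(d)=m\}$.

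\emph{The paper's route.} It writes both sets as intervals of $p$-adic valuations. With $r=v_p(d^m-1)$ and $r_q=v_p(d^{m/q}-1)$, it gets $S(p,d,m)=\{t:\max_{q\mid m}r_q<t\le r\}$ and $T(a,p,d,m)\supseteq\{t\ge 1:\max_{q\in Q(n,m)}(r_q-k)<t\le r-k\}$. Here $Q(n,m)$ is the set of primes $q\mid m$ with $n\mid m/q$. It then compares the lengths of the two intervals.

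\emph{Your route.} You give a direct injection $t\mapsto t-k$ from $S(p,d,m)$ into $T(a,p,d,m)$. Monotonicity of $\ord_{p^j}(d)$ together with $n<m$ gives $t>k$. The lcm splitting from the proof of Theorem 4.3 then gives $\ord_{a'p^t}(d)=m$.

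\emph{What each buys.} Your argument is shorter and avoids several points the paper leaves unstated:
\begin{itemize}
\item that $Q(n,m)\neq\emptyset$;
\item that $r_q\ge k$ for $q\in Q(n,m)$, so the displayed set really has length $r-\max_{q\in Q(n,m)}r_q$;
\item that the non-divisibility conditions for primes $q\notin Q(n,m)$ hold automatically, since $a\mid a'p^{t+k}$ but $a\nmid d^{m/q}-1$.
\end{itemize}
In exchange, once these points are supplied, the paper's interval description gives the exact count $|T(a,p,d,m)|=r-\max_{q\in Q(n,m)}r_q$, which does not depend on $a$. That is what you would need to compute $v_p(\res(\Phi_{d,n},\Phi_{d,m}))$ exactly rather than only bound it.

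You also justify $\Phi_m(d)>1$, so a prime $p$ exists, and $\deg\Phi_{d,n}>0$. The paper uses both without comment in the converse of part (1).
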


\begin{proof}
    \begin{enumerate}
        \item First, we suppose $n\nmid m$. Since $n\nmid m$ and $n<m$, it follows that $a\nmid b$ and $b\nmid a$ by Lemma 4.1 and Corollary 4.2. Therefore, $\res(\Phi_a,\Phi_b) = 1$ for every pair $(a,b)$. Hence, $\res(\Phi_{d,n},\Phi_{d,m}) = 1$.

        The converse direction follows immediately from Theorem 4.4(2). For any prime divisor $p$ of $\Phi_m(d)$, since $\sum_{k\mid n}\mu(n/k)d^k = \deg\Phi_{d,n}$, it follows that
        $$v_p(\res(\Phi_{d,n},\Phi_{d,m})) \geq v_p(\Phi_m(d))\cdot\sum_{k\mid n}\mu(n/k)d^k\geq 1.$$
        Hence, if $n\mid m$, then $\res(\Phi_{d,n},\Phi_{d,m})>1$.
        \item Let $p$ be a prime divisor of $\Phi_m(d)$. Note that
        $$\res(\Phi_{d,n},\Phi_{d,m}) = \prod_{\ord_a(d) = n}\prod_{\ord_b(d) = m}\res(\Phi_a,\Phi_b).$$
        By Theorem 2.2, $\res(\Phi_a,\Phi_b) = p^{\varphi(a)}$ if and only if $b/a$ is a power of $p$. Hence,
        \begin{align}
            v_p(\res(\Phi_{d,n},\Phi_{d,m})) &= \sum_{\ord_{a}(d) = n}\sum_{\ord_b(d) = m}v_p(\res(\Phi_a,\Phi_b))\notag\\
            &= \sum_{\ord_a(d) = n}\varphi(a)\cdot |\{t\geq 1: \ord_{ap^t}(d) = m\}|.
        \end{align}
        Define $T(a,p,d,m) = \{t\geq 1:\ord_{a\cdot p^t}(d) = m\}$. Recall that $S(p,d,m) = \{t\geq 1:\ord_{p^t}(d) = m\}$. We claim that
        $$|T(a,p,d,m)|\geq |S(p,d,m)|$$
        whenever $n\mid m$ and $\ord_a(d) = n$. Since $p\mid \Phi_m(d)$, it follows that $p\mid d^m-1$. We denote $v_p(d^m-1)$ by $r$. For each prime divisor $q$ of $m$, we denote $v_p(d^{m/q}-1)$ by $r_q$. Then $t\in S(p,d,m)$ if and only if $p^t\mid d^m-1$ and $p^t\nmid d^{m/q}-1$ for all prime divisors $q$ of $m$. Hence, $\max_{q\mid m}r_q<t\leq r$. Thus, $S(p,d,m) = \{t\geq 1:\max_{q\mid m}r_q<t\leq r\}$ and so $|S(p,d,m)| = r-\max_{q\mid m}r_q$.

        Write $a = a'p^k$, where $k = v_p(a)$ and $\gcd(a',p) = 1$. Then $a\cdot p^t = a'\cdot p^{t+k}$ and 
        $t\in T(a,p,d,m)$ if and only if $a'\cdot p^{t+k}\mid d^m-1$ and $a'\cdot p^{t+k}\nmid d^{m/q}-1$ for all prime divisors $q$ of $m$. To simplify the notation, define
        $$Q(n,m) = \{\text{prime divisors }q\text{ of }m:n\text{ divides }m/q\}.$$
        Suppose $q\in Q(n,m)$. Since $a'\mid a$, it follows that $a'\mid d^n-1$ and so $a'\mid d^{m/q}-1$. In this case, $t+k\leq r$ and $r_q<t+k$. Hence, $T(a,p,d,m)\supseteq \{t\geq 1:\max_{q\in Q(n,m)}(r_q-k)<t\leq r-k\}$, and
        $$|T(a,p,d,m)|\geq r-k-\max_{q\in Q(n,m)}(r_q-k) = r-\max_{q\in Q(n,m)}r_q.$$
        Since $Q(n,m)$ is a subset of the set of prime divisors $q$ of $m$, one has $\max_{q\in Q(n,m)}r_q\leq \max_{q\mid m}r_q$. Hence, 
        $$|T(a,p,d,m)| \geq r-\max_{q\in Q(n,m)}r_q\geq r-\max_{q\mid m}r_q = |S(p,d,m)|.$$
        Substituting this estimate into the equation (4.1) yields
        \begin{align*}
            v_p(\res(\Phi_{d,n},\Phi_{d,m})) &= \sum_{\ord_a(d) = n}\varphi(a)\cdot|T(a,p,d,m)|\\
            &\geq \sum_{\ord_a(d) = n}\varphi(a)\cdot|S(p,d,m)|\\
            &= v_p(\Phi_{m}(d))\cdot\sum_{\ord_a(d) = n}\varphi(a)&&\text{by Lemma 3.9}\\
            &= v_p(\Phi_m(d))\sum_{k\mid n}\mu(n/k)d^k&&\text{by Proposition 3.3(1)}.
        \end{align*}
    \end{enumerate}
\end{proof}

Recall that if $\alpha$ is a periodic point of a polynomial $\phi(x)$, then having exact period $n$ implies having formal period $n$. Moreover, $\phi(x)$ has a point of formal period $n$ whose exact period is a proper divisor $m$ of $n$ if and only if $\res(\Phi_{\phi,n}(x),\Phi_{\phi,m}(x)) = 0$. By Theorem 4.3 and 4.4, we have shown that $\res(\Phi_{d,n},\Phi_{d,m})\neq 0$ for all distinct positive integers $m,n$. Therefore, we obtain the following corollary.

\begin{corollary}
    If $\alpha$ is a periodic point of $\phi(x) = x^d$ with formal period $n$, then $\alpha$ has exact period $n$.
\end{corollary}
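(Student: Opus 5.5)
The plan is to use the discussion just before the statement: for a periodic point $\alpha$ of $\phi$, formal period $n$ fails to coincide with exact period $n$ exactly when $\alpha$ is a common root of $\Phi_{\phi,n}$ and $\Phi_{\phi,m}$ for some proper divisor $m$ of $n$. So it suffices to show that $\Phi_{d,n}$ and $\Phi_{d,m}$ have no common root for $m\mid n$, $m<n$. Equivalently, $\res(\Phi_{d,m},\Phi_{d,n})\neq 0$, which by Proposition 2.1(1) is the same as having no common root over an algebraic closure.

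First I would pin down the exact period of a root $\alpha$ of $\Phi_{d,n}$. Since $\Phi_{d,n}$ divides $x^{d^n}-x$ by Proposition 3.3(2), $\alpha$ has period $n$. Its exact period $e$ divides $n$: the set of $k$ with $\phi^k(\alpha)=\alpha$ is closed under subtraction, since $\phi$ restricted to the orbit of $\alpha$ is a bijection. Then $\alpha$ is a root of $x^{d^e}-x=\prod_{k\mid e}\Phi_{d,k}$ and of no $x^{d^k}-x$ with $k<e$. Hence $\alpha$ is a root of $\Phi_{d,e}$; otherwise it would be a root of some $\Phi_{d,k}$ with $k\mid e$, $k<e$, and hence of $x^{d^k}-x$, contradicting minimality.

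If $e<n$, then $\alpha$ is a common root of $\Phi_{d,e}$ and $\Phi_{d,n}$, so $\res(\Phi_{d,e},\Phi_{d,n})=0$. I would now rule this out in two cases.
\begin{itemize}
\item If $e=1$, Theorem 4.3 gives $\res(\Phi_{d,1},\Phi_{d,n})=\Phi_n(d)^{d-1}$. This is nonzero because $d\geq 2$ is not a root of unity.
\item If $1<e<n$, the resultant equals $\prod\res(\Phi_a,\Phi_b)$ over $\ord_a(d)=e$ and $\ord_b(d)=n$. Here $a\neq b$, so each factor is positive by Theorem 2.2, as noted at the start of Section 4. Hence the resultant is a positive integer.
\end{itemize}
Either way we have a contradiction, so $e=n$.

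The only step needing care is the claim that the exact period $e$ forces $\Phi_{d,e}(\alpha)=0$. An alternative is to use Lemma 3.1 directly: $\alpha$ is a primitive $k$-th root of unity with $\ord_k(d)=n$, or $\alpha=0$ when $n=1$. Then $\phi^j(\alpha)=\alpha^{d^j}=\alpha$ iff $k\mid d^j-1$ iff $n\mid j$, so the exact period is $n$. This avoids resultants entirely, and I would include it as the cleanest route. The case $n=1$ is trivial.
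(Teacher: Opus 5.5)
Your proposal is correct, and it contains two proofs. The first is the paper's argument. The paper asserts that a point of formal period $n$ whose exact period is a proper divisor $m$ exists if and only if $\res(\Phi_{\phi,n},\Phi_{\phi,m})=0$. It then quotes the nonvanishing of these resultants from Theorems 4.3 and 4.4, which really comes from the positivity remark at the start of Section 4. You go further and prove the direction of that criterion you need: via Proposition 3.3(2), a root of $\Phi_{d,n}$ with exact period $e$ is a root of $\Phi_{d,e}$.

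Your second route is genuinely different and more economical. From Lemma 3.1 alone, every root of $\Phi_{d,n}$ with $n\geq 2$ is a primitive $k$-th root of unity with $\ord_k(d)=n$. Hence $\alpha^{d^j}=\alpha \Leftrightarrow k\mid d^j-1 \Leftrightarrow n\mid j$. This avoids Proposition 3.3(2), Theorem 2.2 and all of Section 4. It also shows why the resultants are nonzero in the first place: each $\Phi_k$ with $\gcd(k,d)=1$ occurs in exactly one dynatomic polynomial, namely $\Phi_{d,\ord_k(d)}$. So distinct dynatomic polynomials of $x^d$ are coprime, and there is nothing to compute.

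The resultant route only buys consistency with the paper's framing through Morton's criterion. For this corollary, the direct route via Lemma 3.1 is the better proof.
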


\bibliographystyle{plain}
\bibliography{Reference}

\end{document}